\providecommand{\U}[1]{\protect\rule{.1in}{.1in}}
\newtheorem{theorem}{Theorem}[section]
\newtheorem{definition}[theorem]{Definition}
\newtheorem{proposition}[theorem]{Proposition}
\newtheorem{corollary}[theorem]{Corollary}
\newtheorem{lemma}[theorem]{Lemma}
\newtheorem{remark}[theorem]{Remark}
\newtheorem{example}[theorem]{Example}
\newtheorem{algorithm}[theorem]{Algorithm}
\newenvironment{proof}{\noindent{\em Proof:}}{$\Box$~\\}
\newcommand*{\rrec}[1]{\tikz[baseline=(char.base), text depth=0pt]\node[anchor=south west, draw,rectangle, rounded corners=10pt, inner sep=2pt, minimum size=7mm, text height=2mm](char){\ensuremath{#1}} ;}
\newcommand{\Pone}{\operatorname{P}_1}
\newcommand{\Ptwo}{\operatorname{P}_2}
\newcommand{\Pl}{\operatorname{P}_{\ell}}
\newcommand{\E}{\operatorname{E}}
\begin{document}

\title{Performance Analysis of the Solving Algorithm for the Kuramoto Model with Rank One Coupling}
\author{Owen Coss\thanks{Department of Mathematics, North Carolina State University (otcoss@gmail.com)}}
\date{\today}
\maketitle

\begin{abstract}
\noindent This paper is a follow up to a previous work that presented an algorithm to efficiently find all of the equilibria of the Kuramoto model with nonuniform coupling described by a rank one matrix. The algorithm was shown experimentally to be more efficient than previously used methods, but its performance was not fully characterized. This paper analyzes the effectiveness of the ``pruning" method used to skip cases with no solutions. The approach utilized is to construct a weighted graph where every path through the graph corresponds to the algorithm's performance on an input. The maximum weight path then corresponds to the worst case performance of the algorithm. This paper shows that even in the worst case, the pruning method employed is very effective at skipping cases with no solutions.

\medskip

\end{abstract}

This paper is a follow up to a previous work that presented a solving algorithm for the Kuramoto model \cite{kuramoto1975} with rank one coupling \cite{coss2018}. In that paper, the algorithm was experimentally shown to be more efficient than other solving methods, but was not fully characterized. This paper examines the ``pruning" method that was used to skip cases and shows that it efficiently skips cases without solutions. The structure of this paper is as follows. The introduction has a brief summary of the two algorithms from \cite{coss2018}. Section one explains how the algorithms can be converted into a weighted graph. Sections two and three then examine the maximum weight paths for the two algorithms and show the worst case performances are $n\vert S \vert$ and $\vert S \vert$ extraneous cases checked where $n$ is the number of oscillators and $S$ is the set of cases with solutions.

The essence of the algorithms from \cite{coss2018} involve the idea of pruning out cases that have no solutions. To do so, a pruning function is used.
\begin{definition}
	The \textbf{pruning function} $\Pl: \mathbb{Z}_{>0} \rightarrow \mathbb{Z}_{\geq 0}$ for $\ell \in \mathbb{Z}_{>0}$ takes the binary representation of $m$, zeroes out everything to the right of the $\ell^{\text{th}}$ zero from the right (left padding with zeros as needed), and then subtracts one. If that number is negative, then it returns 0, otherwise it returns that number.
\end{definition}

\begin{example} Note that $23 = 10111_2$. Then
	\begin{align*}
	\Pone(23) & = 01111_2 = 15 & \Ptwo(23) & = 0
	\end{align*}
\end{example}

The cases to be checked can be numbered the from 0 to $2^{n}-1$. The pruning algorithm starts with case $2^{n}-1$ and works its way down. If a solution for a case is found, then the next case is checked. However, if the case has no solutions, then a number of subsequent cases are also known to have no solutions and can be pruned. In the case of the rank one coupled Kuramoto model, $\Pone$ can always be employed, but in special circumstances, $\Ptwo$ can be used instead to more aggressively prune out extraneous cases. A simplified representation of this algorithm is as follows.

\begin{algorithm}[Prune $\ell$]\label{prune_alg}\
	
	\begin{enumerate}
		\item $j \leftarrow 2^{n} - 1$
		
		\item While $j > 0$ do
		\begin{enumerate}
			\item If case $j$ has a solution, $j \leftarrow j-1$
			\item Else, $j \leftarrow \Pl(j)$
		\end{enumerate}
	\end{enumerate}
\end{algorithm}

\begin{remark}
	Note that the case associated with 0 never has a solution, so it can always be skipped.
\end{remark}

\section{Graph Setup}

This section shows how Algorithm \ref{prune_alg} can be converted into a weighted graph. The first step is to adjust the algorithm to count how many times it checked a case and how many times it found a case with a solution. Throughout the rest of the paper, $S$ will be the set of cases that have solutions. We will only consider situations that have at least one solution, so $S \neq \emptyset$.
\begin{definition}\label{def-rk1-efficiency_alg}
	The \textbf{efficiency algorithm} $\E_{\ell}$ is the algorithm that takes as its input a nonempty subset of $\{1,2,\ldots,2^n-1 \}$ for some given integers $n \geq 2$ and $\ell > 0$ and outputs a pair of integers $(R,C)$ according to the following steps:
	\begin{description}[leftmargin=3em,style=nextline,itemsep=0.5em]
		
		\item[In:] $S \subseteq \{1,2,\ldots,2^n-1\}~$ s.t. $~S \neq \emptyset$
		
		\item[Out:] $(R,C)$
	\end{description}
	\begin{enumerate}
		\item $j \leftarrow 2^n - 1,\; R \leftarrow 0,\; C \leftarrow 0$
		
		\item While $j > 0$ do
		\begin{enumerate}
			\item $C \leftarrow C + 1$
			
			\item If $j \in S$ then
			\begin{enumerate}
				\item $R \leftarrow R+1$
				
				\item $j \leftarrow j - 1$
			\end{enumerate}
			
			\item Else
			\begin{enumerate}
				\item $j \leftarrow \Pl(j)$
			\end{enumerate}
		\end{enumerate}
	\end{enumerate}
	
	For a given $\ell$ and $n$, an input $S \subseteq \{1,2,\ldots,2^n-1 \},~ S \neq \emptyset$ is called $\textbf{valid}$ if $R = \vert S \vert$ where $(R,C)$ is the output of $\E_{\ell}(S)$. Let $S_{\ell}$ be the set of all valid inputs.
\end{definition}
The outputs of the efficiency algorithm $\E_{\ell}$ on a valid input $S$ can be interpreted as 
\begin{itemize}
	\item $R = $ the number of cases checked that are required to be checked
	\item $C = $ the total number of cases that are checked
\end{itemize}
Comparing the sizes of $R$ and $C$ quantifies the efficiency of the pruning algorithm $\Pl$ since the closer $C$ is to $R$, the less work was wasted checking unnecessary cases. We restrict the input to valid sets since we will assume that the algorithm we are abstracting is correct, i.e., it will not skip over any case that does have a solution. The following Lemma presents an immediate fact about valid inputs.

\begin{lemma}\label{lem-rk1-all_pos_in_Sl}
	$2^{n}-1 \in S$ for any $S \in S_{\ell}$.
\end{lemma}
\begin{proof}
	Note that the $n$ digit binary representation of $2^{n}-1$ is all ones. Therefore $\Pl(2^{n}-1) = 0$ for any $\ell > 0$. Suppose that $2^{n}-1 \notin S$ for some $S \in S_{\ell}$. Then $\E_{\ell}(S) = (0, 1)$, but $\vert S \vert > 0$ by definition of $S_{\ell}$. Since this is a contradiction, we must have $2^{n}-1 \in S$.
\end{proof}

In order to consider the algorithmic complexity, we will build a graph that represents all the possible outputs of $\E_{\ell}$ as all the possible paths through this graph. Building all possible $(R,\; C)$ pairs produced by $\E_{\ell}(S_{\ell})$ can be done recursively by working backwards. Let $\E_{\ell}^{m}$ be the same as $\E_{\ell}$ except that the algorithm starts ``in the middle" by initializing $j$ to $m$ in Step 1 of Definition \ref{def-rk1-efficiency_alg}. For a given $S \in S_{\ell}$, at any case $m < 2^{n}-1$, there are two options:
\begin{itemize}
	\item $m \in S$, in which case we get $\E_{\ell}^{m}(S) = (\alpha+1,\; \beta+1)$ where $(\alpha,\; \beta)$ is the result of $\E_{\ell}^{m-1}(S)$.
	
	\item $m \notin S$, in which case we get $\E_{\ell}^{m}(S) = (\alpha,\; \beta+1)$ where $(\alpha,\; \beta)$ is the result of $\E_{\ell}^{\Pl(m)}(S)$.
\end{itemize}
For $m = 2^{n} - 1$, we have only the first option by Lemma \ref{lem-rk1-all_pos_in_Sl}.

To determine the asymptotic complexity, we want to show that $\displaystyle\underset{(R,C) \in \E_{\ell}(S_{\ell})}{\max}\; \frac{C}{R} = f(n)$. To do so, we can rewrite any $(R,C)$ pair as $(\alpha,\; f(n)\cdot\alpha+p)$ where $p$ is the ``par number" of the pair. If $p \leq 0$ for every pair, then the ratio is at most $f(n)$, and if $p=0$ for some pair, then the maximum is exactly $f(n)$. Consider the two cases above again. 
\begin{itemize}
	\item $m \in S$, so we have $\E_{\ell}^{m}(S) = (\alpha+1,\; f(n)\cdot(\alpha+1)+ (p - f(n)+1))$ where $(\alpha,\; f(n)\cdot \alpha + p )$ is the result of $\E_{\ell}^{m-1}(S)$. Therefore the new par number is lower by $f(n)-1$.
	
	\item $m \notin S$, so we get that $\E_{\ell}^{m}(S) = (\alpha,\; f(n)\cdot\alpha+(p+1))$ where $(\alpha,\; f(n)\cdot \alpha + p)$ is the result of $\E_{\ell}^{\Pl(m)}(S)$. Therefore the new par number is one higher.
\end{itemize} 

To represent this graphically, we can connect the node \rrec{m-1} to \rrec{m} with an edge having weight $-f(n)+1$ for $m \leq 2^{n}-1$, and we can connect the node \rrec{\Pl(m)} to \rrec{m} with an edge having weight $+1$ for $m < 2^{n}-1$. Showing that $\displaystyle\underset{(R,C) \in \E_{\ell}(S_{\ell})}{\max}\; \frac{C}{R} = f(n)$ can now be done by showing that the maximum weight path through this graph is zero.

Before beginning to examine the efficiency of $\Pone$ and $\Ptwo$, several lemmas regarding the binary representation of sign cases are required.

\begin{lemma} \label{leading_one}
	The $k \geq 1$ digit binary representation of $2^{k-1} \leq m < 2^{k}$ has a leading one.
\end{lemma}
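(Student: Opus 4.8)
The plan is to argue by contradiction using the elementary bound on the magnitude of a binary number whose leading bit vanishes. First I would write the $k$-digit binary representation of $m$ explicitly as $m = \sum_{i=0}^{k-1} b_i 2^i$ with each $b_i \in \{0,1\}$, so that the leading (leftmost, most significant) digit is exactly $b_{k-1}$. The hypothesis $m < 2^{k}$ guarantees that $m$ does fit into $k$ binary digits, so this representation is well defined and no more than $k$ digits are needed; the left-padding convention of the paper's pruning setup is exactly what makes $b_{k-1}$ meaningful even when $m$ would otherwise be written with fewer significant digits.

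Next I would assume, toward a contradiction, that the leading digit is zero, i.e.\ $b_{k-1} = 0$. Then the sum collapses to $m = \sum_{i=0}^{k-2} b_i 2^i$, and bounding each bit $b_i$ by $1$ gives $m \le \sum_{i=0}^{k-2} 2^i = 2^{k-1} - 1$. Hence $m < 2^{k-1}$, which directly contradicts the standing hypothesis $2^{k-1} \le m$. I would therefore conclude $b_{k-1} = 1$, which is precisely the assertion that the $k$-digit representation of $m$ has a leading one. (The boundary case $k = 1$ is absorbed automatically: the displayed sum becomes empty and equal to $0 = 2^{0} - 1$, so the same inequality forces $b_0 = 1$.)

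I expect no serious obstacle here; the only points requiring a little care are, first, making explicit that ``$k$-digit representation'' permits left-padding with zeros, so that the statement concerns the coefficient $b_{k-1}$ rather than the count of significant digits, and second, the geometric-series identity $\sum_{i=0}^{k-2} 2^i = 2^{k-1} - 1$, which is the workhorse of the upper bound. Both are entirely routine, so the proof should be a few lines.
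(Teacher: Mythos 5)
Your proof is correct. It takes a mildly different route from the paper: the paper simply observes that the two endpoints $2^{k-1}$ (a one followed by $k-1$ zeros) and $2^{k}-1$ ($k$ ones) both begin with a one and asserts that every integer in between therefore does as well, whereas you argue by contradiction, writing $m=\sum_{i=0}^{k-1}b_i 2^i$ and showing that $b_{k-1}=0$ would force $m\le 2^{k-1}-1$, contradicting $m\ge 2^{k-1}$. The two arguments rest on the same elementary fact, but yours is the more self-contained of the two: the paper's step ``every number in between must start with a one'' is precisely the claim your geometric-series bound actually establishes, so your version closes a small gap the paper leaves to the reader. Your handling of the left-padding convention and of the degenerate case $k=1$ is also careful and consistent with how the paper uses $k$-digit representations elsewhere (e.g.\ in the pruning function). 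Nothing is missing.
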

\begin{proof}
	$2^{k-1}$ has the binary representation of a one followed by $k-1$ zeros. $2^{k}-1$ has the binary representation of $k$ ones. Thus every number in between must start with a one and have some combination of $k-1$ ones and zeros.
\end{proof}

\begin{lemma} \label{leading_10}
	The $k \geq 2$ digit binary representation of $2^{k-1} \leq m \leq 2^{k} - 2^{k-2} - 1$ starts with ``10."
\end{lemma}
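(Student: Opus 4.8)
The plan is to prove the claim by identifying the binary forms of the two endpoints of the range and then arguing that every number in between shares the same two leading bits, mirroring the style of the proof of Lemma \ref{leading_one}. First I would compute the endpoints explicitly. The lower endpoint $2^{k-1}$ has the $k$-digit representation consisting of a $1$ followed by $k-1$ zeros, so (using $k \geq 2$) it begins with ``10.'' For the upper endpoint I would rewrite $2^k - 2^{k-2} - 1 = 3\cdot 2^{k-2} - 1$ and note that $3\cdot 2^{k-2} = 11\underbrace{0\cdots0}_{k-2}$ in binary, so subtracting one yields $10\underbrace{1\cdots1}_{k-2}$, which also begins with ``10.''

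Then I would show that every $m$ with $2^{k-1} \le m \le 2^k - 2^{k-2} - 1$ begins with ``10.'' Since all such $m$ lie in $[2^{k-1}, 2^k-1]$, Lemma \ref{leading_one} already guarantees the leading bit is $1$, so only the second bit remains. The cleanest route is to use the upper bound arithmetically: $m \le 2^k - 2^{k-2} - 1 < 3\cdot 2^{k-2} = 2^{k-1} + 2^{k-2}$, whence $m - 2^{k-1} < 2^{k-2}$. This forces the $2^{k-2}$ place (the second digit from the left) to be $0$, and combining with the leading $1$ gives ``10.''

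I expect no serious obstacle here; the only care needed is the bookkeeping that translates the arithmetic bound $m < 3\cdot 2^{k-2}$ into the statement that the second-most-significant bit vanishes, together with ensuring the hypothesis $k \geq 2$ is invoked so that a ``second bit'' exists at all. Optionally I would remark that the argument is in fact an equivalence, so that the interval $[2^{k-1}, 2^k - 2^{k-2} - 1]$ consists of exactly the $k$-digit numbers beginning with ``10,'' since the converse follows by reading the same bound in reverse; the statement as given, however, requires only the forward direction.
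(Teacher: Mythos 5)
Your proof is correct and follows essentially the same approach as the paper: compute the binary forms of the two endpoints and conclude that every $m$ in the interval shares the leading ``10.'' The only difference is that you make the ``every number in between'' step explicit via the bound $m - 2^{k-1} < 2^{k-2}$, which is a slightly more rigorous rendering of the same argument.
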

\begin{proof}
	$2^{k-1}$ has the binary representation of a one followed by $k-1$ zeros. $2^{k} - 2^{k-2} - 1$ has the binary representation of a ``10" followed by $k-2$ ones. Thus every number in between starts with ``10" and has some combination of $k-2$ ones and zeros.
\end{proof}

\begin{lemma} \label{leading_101}
	The $k \geq 3$ digit binary representation of $2^{k} - 2^{k-2} - 2^{k-3} \leq m \leq 2^{k} - 2^{k-2} - 1$ starts with ``101."
\end{lemma}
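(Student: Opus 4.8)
The plan is to follow the same interval-containment strategy used in the proofs of Lemmas \ref{leading_one} and \ref{leading_10}: identify the binary representations of the two endpoints, observe that they are respectively the smallest and largest $k$-digit numbers whose three leading bits are ``101,'' and then conclude that every integer between them must share that prefix.

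First I would compute the binary representation of the lower endpoint. Writing $2^{k}-2^{k-2}-2^{k-3} = 7\cdot 2^{k-3} - 2^{k-2}$, one sees its binary form has ones in positions $k-1$, $k-3$ and a zero in position $k-2$, with all lower positions zero; that is, a ``101'' followed by $k-3$ zeros. This is precisely the smallest $k$-digit integer whose leading three bits are ``101.''

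Next I would handle the upper endpoint. The key identity is $2^{k}-2^{k-2}-1 = 2^{k-1}+2^{k-2}-1$, which follows from $2^{k}=2\cdot 2^{k-1}$ and $2^{k-1}=2\cdot 2^{k-2}$. Its binary representation is a ``101'' followed by $k-3$ ones, i.e.\ the largest $k$-digit integer whose leading three bits are ``101.'' (This matches the upper endpoint of Lemma \ref{leading_10}, as expected, since ``101'' is just ``10'' with an extra ``1,'' and everything to the right of the ``10'' there is one.)

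Finally, since the closed interval $[\,2^{k}-2^{k-2}-2^{k-3},\; 2^{k}-2^{k-2}-1\,]$ runs exactly from the smallest to the largest $k$-digit integer beginning with ``101,'' every $m$ in it must begin with ``101,'' which completes the proof. The only real obstacle is the bookkeeping in the two endpoint computations — in particular verifying the identity $2^{k}-2^{k-2}-1 = 2^{k-1}+2^{k-2}-1$, so that the upper endpoint is correctly recognized as the maximal value with prefix ``101''; everything else is routine, and the $k=3$ case (where the interval degenerates to the single point $101_2 = 5$) serves as a useful sanity check.
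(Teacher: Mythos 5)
Your proof is correct and takes essentially the same approach as the paper: both compute the endpoints as ``101'' followed by $k-3$ zeros and ``101'' followed by $k-3$ ones, respectively, and conclude that everything in between shares the prefix. Your version merely makes explicit the observation that these endpoints are the smallest and largest $k$-digit integers with that prefix, which the paper leaves implicit.
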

\begin{proof}
	$2^{k} - 2^{k-2} - 2^{k-3}$ has a binary representation that starts with ``101" followed by $k-3$ zeros. $2^{k} - 2^{k-2} - 1$ has a binary representation that starts with ``101" followed by $k-3$ ones. Thus every number in between starts with ``101" and has some combination of $k-3$ ones and zeros.
\end{proof}

\begin{lemma} \label{leading_100}
	The $k \geq 3$ digit binary representation of $2^{k-1} \leq m \leq 2^{k} - 2^{k-2} - 2^{k-3} - 1$ starts with ``100."
\end{lemma}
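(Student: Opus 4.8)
The plan is to mirror the proofs of Lemmas \ref{leading_one} through \ref{leading_101} exactly: identify the binary forms of the two endpoints of the range and then argue that every integer strictly between them must share the same leading three digits ``100.'' Since the previous three lemmas all follow this endpoint-sandwiching template, the only real content is pinning down the binary representations of $2^{k-1}$ and $2^{k}-2^{k-2}-2^{k-3}-1$.

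First I would handle the lower endpoint. The number $2^{k-1}$ has the $k$-digit binary representation consisting of a single $1$ in the leading place followed by $k-1$ zeros; because $k \geq 3$, the first three of those digits are $1,0,0$, so it may be read as ``100'' followed by $k-3$ zeros. Next I would handle the upper endpoint $2^{k}-2^{k-2}-2^{k-3}-1$. The key arithmetic observation is that $2^{k}-2^{k-2}-2^{k-3} = 2^{k-3}(8-2-1) = 5\cdot 2^{k-3} = 2^{k-1}+2^{k-3}$, so the upper endpoint equals $2^{k-1}+2^{k-3}-1 = 2^{k-1}+(2^{k-3}-1)$. The term $2^{k-3}-1$ is a block of $k-3$ ones, so this number is precisely ``100'' followed by $k-3$ ones. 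With both endpoints exhibited as ``100'' followed respectively by all zeros and all ones in the trailing $k-3$ positions, every $m$ in between has leading digits ``100'' and an arbitrary combination of $k-3$ ones and zeros after them, which is exactly the claim.

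There is no genuine obstacle here; the one step that warrants care is verifying the identity $2^{k}-2^{k-2}-2^{k-3}-1 = 2^{k-1}+2^{k-3}-1$, since this is what certifies that the stated upper bound is the largest $k$-digit integer whose representation begins with ``100'' rather than spilling into the ``101'' range. It is reassuring to note that this upper endpoint is exactly one less than $2^{k}-2^{k-2}-2^{k-3}$, the lower endpoint of the ``101'' range from Lemma \ref{leading_101}, so the two lemmas partition the ``10'' numbers cleanly.

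As an alternative route that avoids recomputing the endpoints, I could instead derive this lemma directly from Lemmas \ref{leading_10} and \ref{leading_101}. Any $k$-digit number beginning with ``10'' must continue with either ``100'' or ``101.'' Lemma \ref{leading_10} shows the full ``10'' range is $2^{k-1} \leq m \leq 2^{k}-2^{k-2}-1$, and Lemma \ref{leading_101} identifies the top portion $2^{k}-2^{k-2}-2^{k-3} \leq m \leq 2^{k}-2^{k-2}-1$ as the ``101'' numbers; removing this top portion from the ``10'' range leaves precisely $2^{k-1} \leq m \leq 2^{k}-2^{k-2}-2^{k-3}-1$, which must then be the ``100'' numbers. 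I expect the paper favors the self-contained endpoint argument for uniformity with the surrounding lemmas, so I would present that as the main proof and perhaps note the complementation argument only in passing.
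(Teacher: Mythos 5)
Your proposal is correct and follows the same endpoint-sandwiching argument as the paper: the lower endpoint is ``100'' followed by $k-3$ zeros, the upper endpoint is ``100'' followed by $k-3$ ones, and every $m$ between them therefore starts with ``100.'' The extra arithmetic check and the alternative complementation argument via Lemmas \ref{leading_10} and \ref{leading_101} are fine but not needed; the paper simply states the endpoint representations directly.
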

\begin{proof}
	$2^{k-1}$ has a binary representation of a one followed by $k-1$ zeros. $2^{k} - 2^{k-2} - 2^{k-3} - 1$ has a binary representation that starts with ``100" and is followed by $k-3$ ones. Thus every number in between starts with ``100" and has some combination of $k-3$ ones and zeros.
\end{proof}

\begin{example}
	Let $k = 4$.
	\begin{itemize}
		\item Lemma \ref{leading_one}: Every $8 \leq m < 16$ starts with a leading one
		\[
		8 = 1000_2,~ 9 = 1001_2,~ 10 = 1010_2, \ldots, 15 = 1111_2
		\]
		
		\item Lemma \ref{leading_10}: Every $8 \leq m \leq 11$ starts with ``10."
		\[
		8 = 1000_2,~ 9 = 1001_2,~ 10 = 1010_2,~ 11 = 1011_2
		\]
		
		\item Lemma \ref{leading_101}: Every $10 \leq m \leq 11$ starts with ``101."
		\[
		10 = 1010_2,~ 11 = 1011_2
		\]
		
		\item Lemma \ref{leading_100}: Every $8 \leq m \leq 9$ starts with ``100."
		\[
		8 = 1000_2,~ 9 = 1001_2
		\]
	\end{itemize}
\end{example}

\section{Prune 1}
We will show that $\displaystyle\underset{(R,C) \in \E_1(S_1)}{\max}\; \frac{C}{R} = n+1$.

\begin{example}
	Let $n = 3$. Then the graph representing $\E_{1}$ is
	
	\vspace*{0.5cm}
	{\centering
		\begin{tikzpicture}[> = stealth, shorten > = 1pt, semithick, node distance=1.5cm]
		\tikzstyle{crc} = [draw=black, thick, circle, fill=white]
		
		\node[crc](7){$7$};
		\node[crc](6)[left of=7]{$6$};
		\node[crc](5)[left of=6]{$5$};
		\node[crc](4)[left of=5]{$4$};
		\node[crc](3)[left of=4]{$3$};
		\node[crc](2)[left of=3]{$2$};
		\node[crc](1)[left of=2]{$1$};
		\node[crc](0)[below of=1]{$0$};
		
		\path[->, bend left, blue] (1) edge (2);
		\path[->, bend left, blue] (2) edge (3);
		\path[->, bend left, blue] (3) edge (4);
		\path[->, bend left, blue] (4) edge (5);
		\path[->, bend left, blue] (5) edge (6);
		\path[->, bend left, blue] (6) edge (7);
		
		\path[->, bend right, red] (1) edge (2);
		\path[->, bend right, red] (0) edge (3);
		\path[->, bend right, red] (3) edge (4);
		\path[->, bend right, red] (3) edge (5);
		\path[->, bend right, red] (5) edge (6);
		\path[->, red] (0) edge (1);
		\end{tikzpicture}
		
	}
	\noindent where blue lines have weight $-3$ and red lines have weight $+1$. For $S = \{6,7 \} \in S_{1}$, $\E_{1}(S)$ would start with $j = 7$, go to $6$, go to $5$, skip to $3$, then skip to $0$, returning $(2, 4) = (2, 4\cdot 2 - 4)$, so this particular example finishes four under par for the max ratio $C/R$ of $4$. We can convert the steps of $\E_{1}$ on $S$ into a path in the graph above by reversing the order of the $j$ values. Traveling from \rrec{0} to \rrec{3} on the red edge then to \rrec{5} on the red edge, then to \rrec{6} on the blue edge, then to \rrec{7} on the blue edge gives a total weight of $1 + 1 + -3 + -3 = -4$, the par number for $S$.	
\end{example}

It is useful to break the full graph for $\E_{1}$ into several parts. Consider the weighted directed graph $G_k$ for $k \geq 2$ with nodes \rrec{0}, \rrec{2^{k-1}-1}, \rrec{2^{k-1}}, \ldots, \rrec{2^{k}-1} with edges based on the following rules.
\begin{itemize}
	\item An edge with weight $-n$ goes from \rrec{m} to \rrec{m+1} for $2^{k-1}-1 \leq m < 2^{k}-1$.
	
	\item An edge with weight $+1$ goes from \rrec{\Pone(m)} to \rrec{m} for $2^{k-1} -1 \leq m < 2^{k}-1$.
\end{itemize}
Note that \rrec{0} will have no incoming edges, \rrec{2^{k-1}-1} will have one incoming edge of weight $+1$, \rrec{2^{k-1}},\ldots,\rrec{2^{k}-2} will each have two incoming edges, one of weight $-n$ and one of weight $+1$, and \rrec{2^{k}-1} will have one incoming edge of weight $-n$. There are no other edges. Also note that edges go from a smaller number to a larger. Joining $G_2$, $G_3$, \ldots, $G_n$ together and examining the maximum weight path gives the worst case performance for $\E_{1}$.

\begin{example}\label{P1:G_2 - G_4}
	$G_2$, $G_3$, and $G_4$. Blue edges have weight $-n$ and red edges have weight $+1$. The graph for $\E_{1}$ when $n=4$ is $G_2 \cup G_3 \cup G_4$.
	
	\vspace*{0.5cm}
	\begin{minipage}{0.35\linewidth}
		\centering
		\begin{tikzpicture}[> = stealth, shorten > = 1pt, semithick, node distance=1.5cm]
		\tikzstyle{crc} = [draw=black, thick, circle, fill=white]
		
		\node[crc](3){$3$};
		\node[crc](2)[left of=3]{$2$};
		\node[crc](1)[left of=2]{$1$};
		\node[crc](0)[below of=1]{$0$};
		
		\path[->, bend left, blue] (1) edge (2);
		\path[->, bend left, blue] (2) edge (3);
		
		\path[->, bend right, red] (1) edge (2);
		\path[->, red] (0) edge (1);
		\end{tikzpicture}
	\end{minipage}
	\begin{minipage}{0.65\linewidth}
		\centering
		\begin{tikzpicture}[> = stealth, shorten > = 1pt, semithick, node distance=1.5cm]
		\tikzstyle{crc} = [draw=black, thick, circle, fill=white]
		
		\node[crc](7){$7$};
		\node[crc](6)[left of=7]{$6$};
		\node[crc](5)[left of=6]{$5$};
		\node[crc](4)[left of=5]{$4$};
		\node[crc](3)[left of=4]{$3$};
		\node[crc](0)[below of=3]{$0$};
		
		\path[->, bend left, blue] (3) edge (4);
		\path[->, bend left, blue] (4) edge (5);
		\path[->, bend left, blue] (5) edge (6);
		\path[->, bend left, blue] (6) edge (7);
		
		\path[->, bend right, red] (3) edge (4);
		\path[->, bend right, red] (3) edge (5);
		\path[->, bend right, red] (5) edge (6);
		\path[->, red] (0) edge (3);
		\end{tikzpicture}
	\end{minipage}
	
	\vspace*{1cm}
	\centering
	\begin{tikzpicture}[> = stealth, shorten > = 1pt, semithick, node distance=1.5cm]
	\tikzstyle{crc} = [draw=black, thick, circle, fill=white]
	
	\node[crc](15){$15$};
	\node[crc](14)[left of=15]{$14$};
	\node[crc](13)[left of=14]{$13$};
	\node[crc](12)[left of=13]{$12$};
	\node[crc](11)[left of=12]{$11$};
	\node[crc](10)[left of=11]{$10$};
	\node[crc](9)[left of=10]{$9$};
	\node[crc](8)[left of=9]{$8$};
	\node[crc](7)[left of=8]{$7$};
	\node[crc](0)[below of=7]{$0$};
	
	\path[->, bend left, blue] (7) edge (8);
	\path[->, bend left, blue] (8) edge (9);
	\path[->, bend left, blue] (9) edge (10);
	\path[->, bend left, blue] (10) edge (11);
	\path[->, bend left, blue] (11) edge (12);
	\path[->, bend left, blue] (12) edge (13);
	\path[->, bend left, blue] (13) edge (14);
	\path[->, bend left, blue] (14) edge (15);
	
	\path[->, bend right, red] (7) edge (8);
	\path[->, bend right, red] (7) edge (9);
	\path[->, bend right, red] (9) edge (10);
	\path[->, bend right, red] (7) edge (11);
	\path[->, bend right, red] (11) edge (12);
	\path[->, bend right, red] (13) edge (14);
	\path[->, bend right, red] (11) edge (13);
	\path[->, red] (0) edge (7);
	\end{tikzpicture}
\end{example}

\begin{example} \label{P1:G_5}
	$G_5$. Blue edges have weight $-n$ and red edges have weight $+1$.
	
	\vspace*{0.5cm}
	\centering
	\begin{tikzpicture}[> = stealth, shorten > = 1pt, semithick, node distance=1.5cm]
	\tikzstyle{crc} = [draw=black, thick, fill=white, circle]
	
	\node[crc](31){$31$};
	\foreach \x in {30,29,...,23}{
		\pgfmathtruncatemacro\y{\x+1}
		\node[crc](\x)[left of=\y]{$\x$};
	}
	
	\foreach \x in {23,...,30}{
		\pgfmathtruncatemacro\y{\x+1}
		\path[->, bend left, blue](\x) edge (\y);
	}
	
	\node[crc](22)[below of=30, node distance=3cm]{$22$};
	\foreach \x in {21,20,...,15}{
		\pgfmathtruncatemacro\y{\x+1}
		\node[crc](\x)[left of=\y]{$\x$};
	}
	
	\foreach \x in {15,...,21}{
		\pgfmathtruncatemacro\y{\x+1}
		\path[->, bend left, blue](\x) edge (\y);
	}
	
	\node[crc](0)[below of=15]{$0$};
	
	\draw[->, blue, rounded corners] (22) |- ($ (27)!0.6!(19) $) -| (23);
	
	\path[->, bend right, red] (29) edge (30);
	\path[->, bend right, red] (27) edge (29);
	\path[->, bend right, red] (27) edge (28);
	\path[->, bend right, red] (23) edge (27);
	\path[->, bend right, red] (25) edge (26);
	\path[->, bend right, red] (23) edge (25);
	\path[->, bend right, red] (23) edge (24);
	\path[->, bend left, red] (15) edge (23);
	\path[->, bend right, red] (21) edge (22);
	\path[->, bend right, red] (19) edge (21);
	\path[->, bend right, red] (19) edge (20);
	\path[->, bend right, red] (15) edge (19);
	\path[->, bend right, red] (17) edge (18);
	\path[->, bend right, red] (15) edge (17);
	\path[->, bend right, red] (15) edge (16);
	\path[->, red] (0) edge (15);
	\end{tikzpicture}
	
\end{example}

\begin{definition}
	Let
	\begin{align*}
	I^k_t   & = 2^{k}-1 		& II^k_t  & = 2^{k}-2^{k-2}-2\\
	I^k_b   & = 2^{k}-2^{k-2}-1 & II^k_b  & = 2^{k-1}-1
	\end{align*}
	Then $G_k$ for $k \geq 3$ can be split into two ``boxes" as follows:
	\begin{itemize}
		\item $B_{1}$: The induced subgraph of $G_k$ by taking \rrec{0}, \rrec{I^k_b}, \ldots, \rrec{I^k_t}.
		
		\item $B_{2}$: The induced subgraph of $G_k$ by taking \rrec{0}, \rrec{II^k_b}, \ldots, \rrec{II^k_t}.
	\end{itemize}
\end{definition}

\begin{example} \label{P1:G_4 boxes ex}
	Boxes of $G_4$. Blue edges have weight $-n$ and red edges have weight $+1$. Dashed edges go between boxes.
	
	\vspace*{0.5cm}
	\centering
	\begin{tikzpicture}[> = stealth, shorten > = 1pt, semithick, node distance=1.5cm]
	\tikzstyle{crc} = [draw=black, thick, circle, fill=white]
	
	\node[crc](15){$15$};
	\node[crc](14)[left of=15]{$14$};
	\node[crc](13)[left of=14]{$13$};
	\node[crc](12)[left of=13]{$12$};
	\node[crc](11)[left of=12]{$11$};
	\node[crc](10)[left of=11]{$10$};
	\node[crc](9)[left of=10]{$9$};
	\node[crc](8)[left of=9]{$8$};
	\node[crc](7)[left of=8]{$7$};
	\node[crc](0I)[below of=11]{$0$};
	\node[crc](0II)[below of=7]{$0$};
	
	\node[draw, fit=(15) (11) (0I)](I){};
	\node[above of=I, node distance=1.5cm](I-label){$B_1$};
	\node[draw, fit=(10) (7) (0II)](II){};
	\node[above of=II, node distance=1.5cm](II-label){$B_2$};
	
	\path[->, bend left, blue] (7) edge (8);
	\path[->, bend left, blue] (8) edge (9);
	\path[->, bend left, blue] (9) edge (10);
	\path[->, bend left, blue, dashed] (10) edge (11);
	\path[->, bend left, blue] (11) edge (12);
	\path[->, bend left, blue] (12) edge (13);
	\path[->, bend left, blue] (13) edge (14);
	\path[->, bend left, blue] (14) edge (15);

	\path[->, red] (0II) edge (7);
	\path[->, bend right, red] (7) edge (8);
	\path[->, bend right, red] (7) edge (9);
	\path[->, bend right, red] (9) edge (10);
	\path[->, bend right, red, dashed] (7) edge (11);
	\path[->, bend right, red] (11) edge (12);
	\path[->, bend right, red] (11) edge (13);
	\path[->, bend right, red] (13) edge (14);
	\end{tikzpicture}
\end{example}

\paragraph*{Recursive Structure}
$G_{k+1}$ can be constructed from $G_{k}$ using the following propositions.

\begin{proposition} \label{P1:Box I structure}
	$B_1$ of $G_{k+1}$ for $k \geq 2$ is the same as $G_{k}$ with $2^{k}$ added to every nonzero node except that \rrec{I_{b}^{k+1}} has a blue edge from \rrec{II_{t}^{k+1}} and a red edge from \rrec{II_{b}^{k+1}} instead of \rrec{0}. 
\end{proposition}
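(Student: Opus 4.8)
The plan is to verify the claimed correspondence one edge-type at a time, after first pinning down the vertex sets. I would begin by recording that the nonzero nodes of $B_1$ of $G_{k+1}$ run from $I_b^{k+1}$ up to $I_t^{k+1}$, and that $I_b^{k+1} = 2^{k+1}-2^{k-1}-1 = 3\cdot 2^{k-1}-1$ while $I_t^{k+1} = 2^{k+1}-1$. These are exactly the nonzero nodes $2^{k-1}-1,\ldots,2^{k}-1$ of $G_k$ each increased by $2^k$ (indeed $2^{k-1}-1+2^k = 3\cdot 2^{k-1}-1$ and $2^k-1+2^k = 2^{k+1}-1$). Together with the shared node $0$ this establishes the vertex map $m \mapsto m+2^k$, and it remains to match edges under this shift.

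For the blue edges I would simply read off the definition of $G_{k+1}$: the blue edges internal to $B_1$ are those $m \to m+1$ with $I_b^{k+1} \le m < I_t^{k+1}$, which under the shift are precisely the blue edges $m' \to m'+1$ of $G_k$ with $2^{k-1}-1 \le m' < 2^{k}-1$. The only blue edge of $G_{k+1}$ entering node $I_b^{k+1}$ originates at $I_b^{k+1}-1 = II_t^{k+1}$, which lies outside $B_1$; this is the extra blue edge named in the statement, and it has no shifted analogue since the bottom node $2^{k-1}-1$ of $G_k$ receives no blue edge at all.

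The crux is the red edges, and this is where the real work lies. The key claim is the shift identity $\Pone(m+2^k) = \Pone(m) + 2^k$ for every $m$ with $2^{k-1} \le m < 2^k - 1$. To prove it I would use the explicit behavior of $\Pone$ in terms of the rightmost zero: if $p$ is the position of the rightmost zero of $m$, then $\Pone(m) = H - 1$, where $H$ is the number given by the bits of $m$ strictly above position $p$. For $m$ in this range Lemma \ref{leading_one} gives a one in position $k$, while $m < 2^k-1$ forces a zero somewhere among positions $1,\ldots,k-1$, so $p \le k-1$. Adding $2^k$ only turns on bit $k+1$ and leaves bits $1,\ldots,k$ untouched, so the rightmost zero stays at position $p$ and the high part becomes $H + 2^k$, yielding $\Pone(m+2^k) = (H+2^k)-1 = \Pone(m)+2^k$. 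This identity carries each red edge $\Pone(m)\to m$ of $G_k$ (for non-bottom $m$) to the red edge $\Pone(m+2^k)\to m+2^k$ of $G_{k+1}$; I would also check these shifted sources stay inside $B_1$, which follows because $H \ge 2^{k-1}$ forces $\Pone(m) \ge 2^{k-1}-1$, hence $\Pone(m)+2^k \ge I_b^{k+1}$.

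Finally I would isolate the bottom node. In $G_k$ the red edge into node $2^{k-1}-1$ comes from node $0$, since the padded representation of $2^{k-1}-1$ is all ones below a single leading zero, giving $\Pone(2^{k-1}-1)=0$. In $G_{k+1}$, by contrast, $I_b^{k+1}$ begins with ``101'' followed by $k-2$ ones (Lemmas \ref{leading_101} and \ref{leading_100}), so its rightmost zero sits in position $k$, and zeroing the trailing ones and subtracting one gives $\Pone(I_b^{k+1}) = 2^k - 1 = II_b^{k+1}$, a node outside $B_1$. This is exactly the replacement of the red edge from node $0$ by a red edge from node $II_b^{k+1}$ asserted in the proposition. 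I expect the main obstacle to be precisely this red-edge bookkeeping: establishing the shift identity $\Pone(m+2^k)=\Pone(m)+2^k$ cleanly and correctly quarantining the single exceptional bottom node, after which every remaining check is a direct comparison of edge sets.
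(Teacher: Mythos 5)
Your proof is correct and follows essentially the same route as the paper: match the node sets under the shift $m \mapsto m+2^{k}$, observe the blue edges are preserved (with the one cross-box blue edge into $I_{b}^{k+1}$), establish $\Pone(m+2^{k})=\Pone(m)+2^{k}$ because adding $2^{k}$ only inserts a bit above the leading one and hence above the rightmost zero, and compute the exceptional case $\Pone(I_{b}^{k+1})=II_{b}^{k+1}$ directly. The only quibble is the parenthetical citation of Lemma~\ref{leading_100} for the form of $I_{b}^{k+1}$ (which starts with ``101'', not ``100''), but your explicit computation there is correct, so this is a citation slip rather than a gap.
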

\begin{proof}
	\begin{itemize}
		\item Nodes:
		
		It is sufficient to show that the top and bottom nodes of $G_k$ with $2^{k}$ added match the top and bottom nodes of $B_1$ of $G_{k+1}$ since all the nodes in between are sequential.
		\begin{align*}
		II_{b}^{k} + 2^{k} & = (2^{k-1}-1) + 2^{k}     & I_{t}^{k} + 2^{k} & = (2^{k} - 1) + 2^{k} \\
		& = 2^{k+1} - (2^{k+1} - 2^{k} - 2^{k-1}) - 1  & & = 2^{k+1} - 1 \\
		& = 2^{k+1} - 2^{k-1} - 1 					   & & = I_{t}^{k+1} \\
		& = I_{b}^{k+1}								   & &
		\end{align*}
		
		\item Blue Edges:
		
		\rrec{m+2^{k-1}} and \rrec{(m+1)+2^{k-1}} are still adjacent. Also \rrec{I_{b}^{k+1}} and \rrec{II_{t}^{k+1}} are adjacent.
		
		\item Red Edges:
		
		Note that for $2^{k-1} \leq m < I_{t}^{k}$ the $k$ digit binary representation of $m$ starts with a one by Lemma \ref{leading_one} and has at least one zero since $I_{t}^{k}$ consists of $k$ ones. Adding $2^{k}$ to $m$ left-appends a one to the binary representation. Therefore $\Pone(m + 2^{k}) = \Pone(m) + 2^{k}$ since the left-most one will be unaffected by $\Pone$. Furthermore, $I_{b}^{k+1}$ has the binary representation of ``10" followed by $k-1$ ones. Therefore $\Pone(I_{b}^{k+1}) = II_{b}^{k+1}$.
	\end{itemize}
\end{proof}
\begin{example} \label{P1:Box I structure ex}
	$B_1$ of $G_4$ and $G_3$.
	
	\vspace*{0.5cm}
	\centering
	\begin{tikzpicture}[> = stealth, shorten > = 1pt, semithick, node distance=1.5cm]
	\tikzstyle{crc} = [draw=black, thick, circle, fill=white]
	
	\node[crc](15){$15$};
	\node[crc](14)[left of=15]{$14$};
	\node[crc](13)[left of=14]{$13$};
	\node[crc](12)[left of=13]{$12$};
	\node[crc](11)[left of=12]{$11$};
	\node[crc](10)[left of=11]{$10$};
	\node[crc](7)[left of=10, node distance=2cm]{$7$};
	\node[crc](0I)[below of=11]{$0$};
	
	\node[draw, fit=(15) (11) (0I)](I){};
	\node[above of=I, node distance=1.5cm](I-label){$B_1$};
	
	\path[->, bend left, blue, dashed] (10) edge (11);
	\path[->, bend left, blue] (11) edge (12);
	\path[->, bend left, blue] (12) edge (13);
	\path[->, bend left, blue] (13) edge (14);
	\path[->, bend left, blue] (14) edge (15);
	
	\path[->, bend right, red, dashed] (7) edge (11);
	\path[->, bend right, red] (11) edge (12);
	\path[->, bend right, red] (11) edge (13);
	\path[->, bend right, red] (13) edge (14);
	\end{tikzpicture}
	
	\vspace*{1cm}
	\centering
	\begin{tikzpicture}[> = stealth, shorten > = 1pt, semithick, node distance=1.5cm]
	\tikzstyle{crc} = [draw=black, thick, circle, fill=white]
	
	\node[crc](7){$7$};
	\node[crc](6)[left of=7]{$6$};
	\node[crc](5)[left of=6]{$5$};
	\node[crc](4)[left of=5]{$4$};
	\node[crc](3)[left of=4]{$3$};
	\node[crc](0)[below of=3]{$0$};
	
	\path[->, bend left, blue] (3) edge (4);
	\path[->, bend left, blue] (4) edge (5);
	\path[->, bend left, blue] (5) edge (6);
	\path[->, bend left, blue] (6) edge (7);
	
	\path[->, bend right, red] (3) edge (4);
	\path[->, bend right, red] (3) edge (5);
	\path[->, bend right, red] (5) edge (6);
	\path[->, red] (0) edge (3);
	\end{tikzpicture}
	
\end{example}

\begin{proposition} \label{P1:Box II structure}
	$B_2$ of $G_{k+1}$ for $k \geq 2$ is the same as $G_{k}\backslash\rrec{I_{t}^{k}}$ with $2^{k-1}$ added to every nonzero node. 
\end{proposition}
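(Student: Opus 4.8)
The plan is to follow the same three-part template used for Proposition~\ref{P1:Box I structure}, checking in turn that the nodes, the blue edges, and the red edges of $B_2$ of $G_{k+1}$ coincide with those of $G_k \backslash \rrec{I_t^k}$ after $2^{k-1}$ is added to every nonzero node. Since the interior nodes are consecutive integers, matching the node sets reduces to matching the two extreme nonzero nodes: the bottom node $II_b^k + 2^{k-1} = (2^{k-1}-1)+2^{k-1} = 2^k - 1 = II_b^{k+1}$ and the top node $(I_t^k - 1) + 2^{k-1} = (2^k - 2) + 2^{k-1} = 2^{k+1}-2^{k-1}-2 = II_t^{k+1}$. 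For the blue edges, deleting $\rrec{I_t^k}$ removes precisely the one blue edge entering it, leaving a blue edge between every pair of consecutive nonzero nodes beneath it; a rigid shift by $2^{k-1}$ carries consecutive nodes to consecutive nodes, which is exactly the blue-edge pattern of $B_2$.

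The red edges carry the content of the proposition, and the main obstacle is the shift identity
\[
\Pone(m + 2^{k-1}) = \Pone(m) + 2^{k-1}, \qquad 2^{k-1} \le m \le 2^k - 2 .
\]
To establish it I would write $m = 2^{k-1}+r$ with $0 \le r \le 2^{k-1}-2$, so that $m+2^{k-1} = 2^k + r$ has the same low $k-1$ bits as $m$ while, by Lemma~\ref{leading_10}, beginning with ``10''. Because $r \le 2^{k-1}-2$ is not a block of all ones, both $m$ and $m+2^{k-1}$ have their rightmost zero at the same position $p \le k-2$, strictly below the shifted leading block; hence $\Pone$ strips off the same power of two from each, giving the identity. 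This shows every interior red edge of $B_2$ is the $2^{k-1}$-translate of the corresponding red edge of $G_k$.

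Two boundary checks then complete the matching. First, the translated source must remain a genuine nonzero node: a short analysis of where the rightmost zero sits gives $\Pone(m) \ge 2^{k-1}-1 = II_b^k > 0$ for $2^{k-1} \le m \le 2^k - 2$, and together with $\Pone(m) \le m-1$ this places the shifted source inside the node range of $B_2$, so it is indeed translated along with its target rather than left fixed at $\rrec{0}$. Second, the bottom node is handled directly: $\Pone(II_b^k) = \Pone(2^{k-1}-1) = 0$ and $\Pone(II_b^{k+1}) = \Pone(2^k-1) = 0$, so in both graphs the red edge into the bottom node issues from $\rrec{0}$, which the ``nonzero nodes only'' shift fixes. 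Finally, since $\Pone(m) \le m - 1 < 2^k - 1$ for every $m \le 2^k - 2$ and $I_t^k$ receives no red edge in $G_k$, deleting $\rrec{I_t^k}$ removes no red edge from the relevant range; assembling the node, blue-edge, and red-edge agreements yields the stated identification.
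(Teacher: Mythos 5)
Your proof is correct and follows essentially the same node/blue-edge/red-edge template as the paper's, with the same boundary computations for $II_b^{k+1}$ and $II_t^{k+1}$. The only difference is in the red-edge step: where the paper splits into two cases according to whether $\Pone(m)$ retains its leading one, you prove the shift identity $\Pone(m+2^{k-1})=\Pone(m)+2^{k-1}$ uniformly by noting that adding $2^{k-1}$ turns the leading ``1'' into ``10'' without moving the rightmost zero --- a slightly cleaner justification of the same fact, supplemented by boundary checks (source stays in range, the bottom node, the deleted node $\rrec{I_t^k}$) that the paper leaves implicit.
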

\begin{proof}
	\begin{itemize}
		\item Nodes:
		
		It is sufficient to show that the top and bottom nodes of $G_k\backslash\rrec{I_{t}^{k}}$ with $2^{k-1}$ added match the top and bottom nodes of Box II of $G_{k+1}$ since all the nodes in between are sequential.
		\begin{align*}
		II_{b}^{k} + 2^{k-1} & = (2^{k-1}-1) + 2^{k-1}	& (I_{t}^{k} - 1) + 2^{k-1} & = (2^{k} - 2) + 2^{k-1} \\
		& = 2^{k} - 1  									& & = 2^{k+1} - (2^{k+1} - 2^{k} - 2^{k-1}) - 2 \\
		& = II_{b}^{k+1}					   				& & = 2^{k+1} - 2^{k-1} - 2 \\
		& 							   					& & = II_{t}^{k+1}
		\end{align*}
		
		\item Blue Edges:
		
		\rrec{m+2^{k-1}} and \rrec{(m+1)+2^{k-1}} are still adjacent.
		
		\item Red Edges:
		
		The $k$ digit binary representation of $II_{b}^{k} < m < I_{t}^{k}$ has a leading one by Lemma \ref{leading_one} and at least one zero since $I_{t}^{k}$ is $k$ ones. Consider two cases.
		\begin{itemize}
			\item[$\circ$] $\Pone(m)$ has a leading one in its $k$ digit binary representation:
			
			Then $\Pone(m + 2^{k-1}) = \Pone(m) + 2^{k-1}$.
			
			\item[$\circ$] $\Pone(m)$ has a leading zero in its $k$ digit binary representation:
			
			Then $m$ must have the form ``10\ldots01\ldots1" and $\Pone(m) = II_{b}^{k}$. Therefore, $m + 2^{k-1}$ has the form ``100\ldots01\ldots1" and $\Pone(m + 2^{k-1}) = II_{b}^{k+1} = II_{b}^{k} + 2^{k-1}$.
		\end{itemize}
	\end{itemize}
\end{proof}

\begin{example} \label{P1:Box II structure ex}
	$B_2$ of $G_4$ and $G_3$.
	
	\vspace*{0.5cm}
	\centering
	\begin{tikzpicture}[> = stealth, shorten > = 1pt, semithick, node distance=1.5cm]
	\tikzstyle{crc} = [draw=black, thick, circle, fill=white]
	
	\node[crc](10){$10$};
	\node[crc](9)[left of=10]{$9$};
	\node[crc](8)[left of=9]{$8$};
	\node[crc](7)[left of=8]{$7$};
	\node[crc](0)[below of=7]{$0$};
	
	\node[draw, fit=(10) (7) (0)](II){};
	\node[above of=II, node distance=1.5cm](II-label){$B_2$};
	
	\path[->, bend left, blue] (7) edge (8);
	\path[->, bend left, blue] (8) edge (9);
	\path[->, bend left, blue] (9) edge (10);
	
	\path[->, red] (0) edge (7);
	\path[->, bend right, red] (7) edge (8);
	\path[->, bend right, red] (7) edge (9);
	\path[->, bend right, red] (9) edge (10);
	\end{tikzpicture}
	
	\vspace*{1cm}
	\centering
	\begin{tikzpicture}[> = stealth, shorten > = 1pt, semithick, node distance=1.5cm]
	\tikzstyle{crc} = [draw=black, thick, circle, fill=white]
	
	\node[crc](7){$7$};
	\node[crc](6)[left of=7]{$6$};
	\node[crc](5)[left of=6]{$5$};
	\node[crc](4)[left of=5]{$4$};
	\node[crc](3)[left of=4]{$3$};
	\node[crc](0)[below of=3]{$0$};
	
	\path[->, bend left, blue] (3) edge (4);
	\path[->, bend left, blue] (4) edge (5);
	\path[->, bend left, blue] (5) edge (6);
	\path[->, bend left, blue] (6) edge (7);
	
	\path[->, bend right, red] (3) edge (4);
	\path[->, bend right, red] (3) edge (5);
	\path[->, bend right, red] (5) edge (6);
	\path[->, red] (0) edge (3);
	\end{tikzpicture}
	
\end{example}

We can make some additional observations about the structure of $G_k$.

\begin{proposition}\label{P1:One zero edge}
	The only edge from \rrec{0} goes to \rrec{II_{b}^{k}}.
\end{proposition}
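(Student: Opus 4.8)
The plan is to enumerate the two kinds of edges that $G_k$ admits and decide which of them can have \rrec{0} as their source. By construction $G_k$ has only blue edges, running from \rrec{m} to \rrec{m+1} for $2^{k-1}-1 \le m < 2^k-1$, and red edges, running from \rrec{\Pone(m)} to \rrec{m} for $2^{k-1}-1 \le m < 2^k-1$. First I would dispose of the blue edges: every blue edge has source $m \ge 2^{k-1}-1 \ge 1$ (using $k \ge 2$), so none of them starts at \rrec{0}. Consequently every edge leaving \rrec{0} is a red edge, and a red edge leaves \rrec{0} exactly for those $m$ with $2^{k-1}-1 \le m < 2^k-1$ and $\Pone(m)=0$. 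So the proposition reduces to determining which indices $m$ in the range satisfy $\Pone(m)=0$.

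The core of the argument is to characterize when $\Pone(m)=0$. Unwinding the definition, $\Pone$ zeros out every bit strictly to the right of the rightmost zero of $m$ to form an intermediate number, and then subtracts one. I would first observe that this intermediate number is always even: its lowest bit is cleared when $m$ ends in one or more ones, and is already $0$ when $m$ is even. Since any even number greater than zero stays positive after subtracting one, the returned value is $0$ precisely when the intermediate number itself equals $0$. That intermediate number vanishes exactly when $m$ has no $1$ at or above its rightmost zero, i.e. when the binary representation of $m$ is a solid block of trailing ones, which is to say $m = 2^j-1$ for some $j \ge 1$. Hence $\Pone(m)=0$ if and only if $m$ is one less than a power of two.

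Finally I would intersect this characterization with the index range of the red edges. Among the integers satisfying $2^{k-1}-1 \le m < 2^k-1$, the only value of the form $2^j-1$ is $m = 2^{k-1}-1$ itself, since the next such number is $2^k-1$, which the strict upper bound excludes. Because $II_b^k = 2^{k-1}-1$, the unique red edge leaving \rrec{0} terminates at \rrec{II_b^k}, and combined with the absence of blue edges from \rrec{0} this gives the claim.

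The one delicate point, and the place I would take care, is the padding convention in the definition of $\Pone$: for an all-ones index such as $m = 2^{k-1}-1$ the relevant rightmost zero is the padded leading zero rather than a zero appearing among the written digits, so this boundary case should be verified explicitly to confirm $\Pone(2^{k-1}-1)=0$ instead of being swept in with the generic case.
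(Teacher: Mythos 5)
Your proof is correct, but it takes a genuinely different route from the paper. The paper proves this proposition by induction on $k$: the base cases are read off from the explicit pictures of $G_2$, $G_3$, $G_4$, and the inductive step invokes Propositions \ref{P1:Box I structure} and \ref{P1:Box II structure} to see that neither box of $G_{N+1}$ acquires a new edge out of \rrec{0}. You instead argue directly from the definition of the graph: blue edges cannot start at \rrec{0} since their sources are at least $2^{k-1}-1\ge 1$, and a red edge starts at \rrec{0} exactly when $\Pone(m)=0$, which you characterize as happening precisely for $m=2^j-1$; intersecting with the index range $2^{k-1}-1\le m<2^k-1$ leaves only $m=2^{k-1}-1=II^k_b$. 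Your key lemma --- that the intermediate number in the definition of $\Pone$ is always even, so the output is $0$ iff that number is $0$, iff $m$ is a solid block of ones --- is exactly the right observation, and your explicit handling of the padded leading zero for all-ones inputs closes the one boundary case that could otherwise be glossed over. What your approach buys is self-containedness and a slightly stronger statement (a complete description of $\Pone^{-1}(0)$) without any appeal to the recursive box structure or to base cases verified by inspection of figures; what the paper's approach buys is uniformity, since the same inductive machinery is reused for the other structural propositions and for the maximum-weight-path theorem. Either proof is acceptable here.
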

\begin{proof}
	(Proof by induction)
	
	\begin{itemize}
		\item Base Cases: Observe that this is true for $G_2, G_3$, and $G_4$ in Example \ref{P1:G_2 - G_4}.
		
		\item Suppose for induction that the claim holds for $G_N$ where $N \geq 4$ and consider $G_{N+1}$. By Proposition \ref{P1:Box I structure}, $B_1$ of $G_{N+1}$ is a copy of $G_N$ except that the only edge from \rrec{0} has been changed. By Proposition \ref{P1:Box II structure}, $B_2$ of $G_{N+1}$ is a copy of $G_N\backslash\rrec{I_{t}^{k}}$, so it will have only one edge from \rrec{0} which goes to \rrec{II_{b}^{N+1}}.
	\end{itemize}
\end{proof}

\begin{proposition}\label{P1:Box II to Box I edges}
	There are only two edges from $B_2$ to $B_1$ of $G_k$, namely a blue edge from \rrec{II_{t}^{k}} to \rrec{I_{b}^{k}} and a red edge from \rrec{II_{b}^{k}} to \rrec{I_{b}^{k}}.
\end{proposition}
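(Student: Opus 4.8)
The plan is to exploit that, apart from the shared vertex \rrec{0}, the two boxes partition the nonzero vertices of $G_k$ into the consecutive blocks $\{II^k_b,\dots,II^k_t\}$ (the $B_2$ vertices) and $\{I^k_b,\dots,I^k_t\}$ (the $B_1$ vertices), where $I^k_b = II^k_t+1$. An edge from $B_2$ to $B_1$ is thus an edge of $G_k$ whose tail lies in the first block (or is \rrec{0}) and whose head lies in the second. I would dispose of \rrec{0} first: by Proposition \ref{P1:One zero edge} its only outgoing edge goes to \rrec{II^k_b}, a $B_2$ vertex, so \rrec{0} produces no edge into $B_1$. The blue edges are then immediate, since a blue edge joins \rrec{m} to \rrec{m+1} and therefore crosses from the first block to the second exactly when $m = II^k_t$ and $m+1 = I^k_b$; this is the asserted blue edge, and no other blue edge crosses.

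The substance of the argument is the red edges. A red edge goes from \rrec{\Pone(m)} to \rrec{m}, so it crosses from $B_2$ to $B_1$ precisely when the head $m$ is a $B_1$ vertex with $m < I^k_t$ (red edges only target $m < 2^k-1$) and the tail $\Pone(m)$ is a $B_2$ vertex. I would split the $B_1$ heads into two parts. For $m = I^k_b = 2^k-2^{k-2}-1$, whose binary form is ``$10\underbrace{1\cdots1}_{k-2}$,'' the rightmost zero sits at position $k-2$; zeroing the lower bits yields $2^{k-1}$ and subtracting one gives $\Pone(I^k_b) = 2^{k-1}-1 = II^k_b$, the claimed red edge. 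For a head $m$ with $I^k_b < m < I^k_t$ we have $m \ge 2^{k-1}+2^{k-2}$, so the two leading bits of $m$ are ``$11$'' and the rightmost zero of $m$ lies at some position $p \le k-3$. Zeroing the bits below position $p$ therefore leaves the leading ``$11$'' untouched, producing an intermediate value $m' \ge 2^{k-1}+2^{k-2}$; subtracting one gives $\Pone(m) = m'-1 \ge 2^{k-1}+2^{k-2}-1 = I^k_b$. Since also $\Pone(m) < m \le I^k_t$, the tail $\Pone(m)$ is again a $B_1$ vertex and the edge does not cross. Together these show $I^k_b$ is the unique $B_1$ head receiving a red edge from $B_2$, which completes the proof.

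The main obstacle is exactly this second red-edge case: showing that for every $B_1$ head strictly above $I^k_b$ the tail $\Pone(m)$ stays inside $B_1$. A naive bound such as $\Pone(m) \ge m - 2^{k-3}$ is too weak, so the key point is structural rather than numerical. All such $m$ fill the interval $[2^{k-1}+2^{k-2},\,2^k-1]$ and hence begin with ``$11$,'' which forces the rightmost zero down to position $k-3$ or lower; consequently the zeroing step of $\Pone$ cannot disturb the leading two ones, the intermediate value stays at least $2^{k-1}+2^{k-2}$, and the final value stays at least $I^k_b$. Once this is established, the treatment of \rrec{0}, the blue edges, and the single boundary value $m = I^k_b$ are all routine.
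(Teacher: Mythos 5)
Your proof is correct, but it takes a genuinely different route from the paper. The paper disposes of this proposition in one line, citing Proposition \ref{P1:Box I structure}: since $B_1$ of $G_{k}$ is exhibited there as a relabeled copy of $G_{k-1}$ in which the single edge out of \rrec{0} is replaced by precisely the blue edge from $II^k_t$ and the red edge from $II^k_b$, every other edge into a $B_1$ node is the image of an edge between nonzero nodes of $G_{k-1}$ and hence internal to $B_1$. The binary-arithmetic work is thus already localized in the proof of that earlier proposition. You instead verify the claim directly in $G_k$ by classifying all edges with head in $B_1$: the \rrec{0} case via Proposition \ref{P1:One zero edge} (which is legitimate here, as that proposition is established independently and earlier), the single crossing blue edge from the consecutive-block structure, the computation $\Pone(I^k_b)=II^k_b$, and — the real content — the observation that every $m$ with $I^k_b < m < I^k_t$ satisfies $m \ge 2^{k-1}+2^{k-2}$, so its leading bits are ``11,'' its rightmost zero sits at position at most $k-3$, and therefore $\Pone(m) \ge 2^{k-1}+2^{k-2}-1 = I^k_b$ stays inside $B_1$. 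I checked this bit-level argument and it is sound, including at the boundary $k=3$. What each approach buys: the paper's is essentially free given the recursive machinery it has already built and keeps all pruning-function computations in one proof; yours is self-contained at level $k$, avoids the shift-by-$2^k$ relabeling, and makes more explicit exactly which numerical feature of the nodes (the leading ``11'') prevents any further red edges from escaping $B_1$.
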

\begin{proof}
	Immediate from Proposition \ref{P1:Box I structure}.
\end{proof}

\paragraph*{Maximum Weight Path}

Now that the structure of the graph has been examined we will use the recursive structure to examine the maximum weight path.

\begin{theorem}
	The maximum weight path from \rrec{0} to \rrec{I^{k}_t} in $G_k$ for $k \geq 2$ is zero when $n = k$.
\end{theorem}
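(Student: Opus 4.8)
The plan is to establish a recurrence relating $G_{k+1}$ to $G_k$ and then solve it. For a parameter $n$, let $M_k(n)$ denote the maximum weight of a directed path from $0$ to $I^{k}_{t}$ in $G_k$ when each blue edge carries weight $-n$; the theorem asserts $M_k(k)=0$. I would first record two facts about a single $G_k$. Every edge of $G_k$ increases the node value, and by Proposition \ref{P1:One zero edge} the only edge leaving $0$ is the red (weight $+1$) edge into $II^{k}_{b}$, so every path from $0$ to $I^{k}_{t}$ begins with that edge; hence $M_k(n) = 1 + W$, where $W$ is the maximum weight of a path from $II^{k}_{b}$ to $I^{k}_{t}$. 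Moreover the unique edge entering $I^{k}_{t}$ is the blue edge from $2^k-2$, so $W$ also equals $W' - n$, where $W'$ is the maximum weight of a path from $II^{k}_{b}$ to $2^k-2$.

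The heart of the argument is the recurrence
\[
M_{k+1}(n) = \max\bigl(M_k(n)+1,\; 2M_k(n)-1\bigr), \qquad k \ge 2.
\]
To derive it I would dissect a maximum-weight path from $0$ to $I^{k+1}_{t}$ in $G_{k+1}$ using the two boxes. The path is forced to start $0 \to II^{k+1}_{b}$ into $B_2$ and must reach $I^{k+1}_{t} \in B_1$. Since edges strictly increase the node value and $II^{k+1}_{t} < I^{k+1}_{b}$, the path leaves $B_2$ for $B_1$ exactly once, and by Proposition \ref{P1:Box II to Box I edges} it does so either along the red edge $II^{k+1}_{b} \to I^{k+1}_{b}$ or along the blue edge $II^{k+1}_{t} \to I^{k+1}_{b}$; in both cases it arrives at $I^{k+1}_{b}$ and then runs inside $B_1$ up to $I^{k+1}_{t}$. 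Letting $W_1$ be the best $B_1$-path $I^{k+1}_{b}\to I^{k+1}_{t}$ and $W_2$ the best $B_2$-path $II^{k+1}_{b}\to II^{k+1}_{t}$, the two crossing choices give
\[
M_{k+1}(n) = W_1 + \max\bigl(2,\; 1 + W_2 - n\bigr).
\]
By Proposition \ref{P1:Box I structure}, $B_1$ is a shifted copy of $G_k$ in which the path $I^{k+1}_{b}\to I^{k+1}_{t}$ is the image of $II^{k}_{b}\to I^{k}_{t}$, so the first observation gives $W_1 = M_k(n)-1$. By Proposition \ref{P1:Box II structure}, $B_2$ is a shifted copy of $G_k$ with $I^{k}_{t}$ deleted, in which $W_2$ is the image of the best path $II^{k}_{b}\to 2^k-2$; the first observation then gives $W_2 = (M_k(n)-1) + n$. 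Substituting, $1 + W_2 - n = M_k(n)$, so $M_{k+1}(n) = (M_k(n)-1) + \max(2, M_k(n))$, which is the displayed recurrence.

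To finish, a direct inspection of $G_2$ (with red edges $0\to1$ and $1\to2$ and blue edges $1\to2$, $2\to3$) gives the base value $M_2(n) = 2 - n$. Fixing the target dimension $N \ge 2$ and inducting on $j$, I would show $M_j(N) = j - N$ for $2 \le j \le N$: when $j < N$ we have $M_j(N) \le -1$, so the first term dominates in $\max(M_j(N)+1,\, 2M_j(N)-1)$ and $M_{j+1}(N) = M_j(N)+1 = (j+1) - N$. Setting $j = N$ yields $M_N(N) = 0$, which is the theorem with $n = k = N$. I expect the main obstacle to be the bookkeeping in the recurrence, at two points. First, one must argue cleanly that a maximum path crosses between the boxes exactly once and only through the two edges of Proposition \ref{P1:Box II to Box I edges}; this is where the monotonicity of all edges is essential. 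Second, one must keep the blue-edge weight tied to the top-level parameter $n$ throughout the recursion rather than to $k$: the copies $B_1$ and $B_2$ inherit weight $-n$, and it is the identity $W_2 = W_1 + n$, coming from the single blue edge entering $I^{k}_{t}$, that collapses the two crossing options into the clean recurrence and ultimately lets the value $n = k$ be substituted at the end.
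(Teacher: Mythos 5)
Your proposal is correct, and it rests on the same structural skeleton as the paper's proof: the decomposition of $G_{k+1}$ into the boxes $B_1$ and $B_2$, Propositions \ref{P1:Box I structure}, \ref{P1:Box II structure}, \ref{P1:One zero edge}, and \ref{P1:Box II to Box I edges} to force every path through the edge $0 \to II^{k+1}_b$ and then through exactly one of the two crossing edges into $I^{k+1}_b$, and induction on $k$. Where you genuinely depart from the paper is in the bookkeeping. The paper's inductive hypothesis is the single assertion that the maximum weight is zero when $n=k$, so in passing from $G_N$ to $G_{N+1}$ it must informally track how the maximum changes as the blue weight drops from $-N$ to $-(N+1)$ inside the copied subgraphs (``blue edges have a weight that is one less\ldots which cancels out the extra red path''). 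You instead keep the blue weight as a free parameter $n$, define $M_k(n)$, and derive the exact recurrence $M_{k+1}(n)=\max\bigl(M_k(n)+1,\,2M_k(n)-1\bigr)$ with base $M_2(n)=2-n$, which gives $M_j(n)=j-n$ for $2\le j\le n$ and in particular $M_k(k)=0$. This buys two things: the weight-shifting issue disappears because $n$ is fixed throughout the recursion, and you get the exact maximum rather than an upper bound --- in particular the strict negativity $M_k(n)=k-n<0$ for $n>k$, which is precisely the fact the paper has to argue separately at the start of Corollary \ref{cor-rk1-prune_1_mw}. Your identification $W_2=W_1+n$ via the unique blue edge into $I^k_t$, which collapses the two crossing options into one clean formula, checks out (e.g.\ it reproduces $M_3(n)=3-n$ and $M_4(n)=\max(4-n,5-2n)$ correctly). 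In a final write-up you should make explicit the two small facts you already flag: a path cannot re-enter $B_2$ after crossing into $B_1$, and the red crossing edge can only be used immediately upon arrival at $II^{k+1}_b$; both follow from the fact that every edge strictly increases the node value and the values in $B_1$ all exceed those in $B_2$.
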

\begin{proof}
	(Proof by induction)
	
	\begin{itemize}
		\item Base Cases: The maximum weight path in $G_2, G_3$, and $G_4$ is zero as can be seen Example \ref{P1:G_2 - G_4} with $n = k$.
		
		\item Suppose for induction that $G_N$ has a maximum weight path of zero and consider $G_{N+1}$. By Proposition \ref{P1:One zero edge}, all paths from \rrec{0} to \rrec{I_{t}^{N+1}} must use the red edge from \rrec{0} to \rrec{II_{b}^{N+1}}. There are two cases to consider by Proposition \ref{P1:Box II to Box I edges}.
		\begin{itemize}
			\item[$\circ$] A path from \rrec{II_{b}^{N+1}} to \rrec{II_{t}^{N+1}} inside $B_2$ and then the blue edge from \rrec{II_{t}^{N+1}} to \rrec{I_{b}^{N+1}} is taken.
			
			Note that this path is a copy of a path in $G_{N}$ except that blue edges have a weight that is one less, so the maximum weight is at most zero. Furthermore, a path from \rrec{I_{b}^{N+1}} to \rrec{I_{t}^{N+1}} has a maximum weight of at most zero since it is a copy of a path in $G_N$ by Proposition \ref{P1:Box I structure} except with a ``cheaper" replacement for the edge from \rrec{0} to \rrec{II_{b}^{N}}.
			
			\item[$\circ$] The red edge from \rrec{II_{b}^{N+1}} to \rrec{I_{b}^{N+1}} is taken.
			
			Any path from \rrec{I_{b}^{N+1}} to \rrec{I_{t}^{N+1}} is a copy of a path from \rrec{II_{b}^{N}} to \rrec{I_{t}^{N}} by Proposition \ref{P1:Box I structure} except that blue edges have a weight that is one less. The blue edge from \rrec{I_{t}^{N+1}-1} to \rrec{I_{t}^{N+1}} must be taken which cancels out the extra red path. Therefore the maximum weight path is zero.
		\end{itemize}
	\end{itemize}
\end{proof}

\begin{corollary}\label{cor-rk1-prune_1_mw}
	The maximum weight path from \rrec{0} to \rrec{2^{n}-1} in $G_2 \cup G_3 \cup \cdots \cup G_n$ is zero.
\end{corollary}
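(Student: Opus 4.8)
The plan is to exploit the glued, recursive structure of the union. Because the top node $I^{k}_t = 2^{k}-1$ of $G_k$ is the bottom node $II^{k+1}_b = 2^{k}-1$ of $G_{k+1}$, and \rrec{0} is common to every $G_k$, the union is a chain of ``boxes'' sharing single boundary nodes. I will first use the observation recorded when the edges of $G_k$ were defined, that every edge runs from a smaller label to a larger one; hence the union is acyclic and each path from \rrec{0} to \rrec{2^n-1} visits a strictly increasing sequence of nodes, so I may speak of the last visit to a node and may concatenate optimal subpaths freely. Write $M_k$ for the maximum weight of a path from \rrec{0} to \rrec{2^k-1} in the union.

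The key structural fact is a bottleneck at \rrec{2^{n-1}-1}. Since there is no $G_{n+1}$, the node \rrec{2^n-1} receives no red edge in the union and is reached only by the blue edge from \rrec{2^n-2}; and for any interior label $2^{n-1} \le m \le 2^n-2$, Lemma \ref{leading_one} gives $m$ a leading one together with at least one zero, so $\Pone(m) \ge 2^{n-1}-1$. Consequently every edge into the interior or top of $G_n$ originates at a node $\ge 2^{n-1}-1$, and no edge of another $G_{k'}$ reaches these nodes, so by monotonicity any path splits as $P_1$ from \rrec{0} to \rrec{2^{n-1}-1} (lying in $G_2 \cup \cdots \cup G_{n-1}$) followed by $P_2$ from \rrec{2^{n-1}-1} to \rrec{2^n-1} (lying in $G_n$), with additive weight. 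For $P_2$ I invoke the previous theorem with $k=n$: the maximum weight from \rrec{0} to \rrec{2^n-1} in $G_n$ is $0$, and since Proposition \ref{P1:One zero edge} makes the red edge $0 \to 2^{n-1}-1$ of weight $+1$ the only edge out of \rrec{0}, the maximum weight of a bottom-to-top path $P_2$ is exactly $-1$.

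The crux is the auxiliary claim $M_k = 1$ for $2 \le k \le n-1$; granting it, $M_n = M_{n-1} + (\text{maximum weight of a } P_2) = 1 + (-1) = 0$, with equality attained by the weight-$1$ edge $0 \to 2^{n-1}-1$ followed by an optimal $P_2$, which proves the corollary. I will establish the claim by induction on $k$. The lower bound is immediate, since for $k \le n-1$ the graph $G_{k+1}$ supplies the red zero-edge $0 \to 2^k-1$ of weight $+1$. For the upper bound, a path to \rrec{2^k-1} either ends with that zero-edge, contributing weight exactly $1$, or ends with the blue edge from \rrec{2^k-2}; in the latter case the same bottleneck argument at level $k$ routes the path through \rrec{2^{k-1}-1}, so its weight is $M_{k-1}$ plus the weight of a bottom-to-top path in $G_k$. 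Since the blue weight $-n$ is at most $-k$, such a path weighs at most what the previous theorem bounds by $-1$, and with $M_{k-1} \le 1$ by induction (the base $M_1 = 1$ being clear, as \rrec{1} is reached only by the edge $0 \to 1$) the total is at most $0 < 1$.

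The step I expect to be the main obstacle is this upper bound on $P_1$ through the auxiliary induction: it is where the fixed blue weight $-n$, rather than the ``matched'' weight $-k$ that the theorem uses for $G_k$ in isolation, must be controlled, and where the bottleneck-and-monotonicity reasoning has to be reapplied at every level of the recursion instead of only at the top. Once that is in place, the remainder is bookkeeping built on the previous theorem together with Propositions \ref{P1:One zero edge} and \ref{P1:Box II to Box I edges}.
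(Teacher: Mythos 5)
Your proposal is correct and follows essentially the same route as the paper: both arguments reduce to showing that the optimal path must enter $G_n$ via the direct weight-$+1$ red edge from \rrec{0} to \rrec{2^{n-1}-1}, because any detour through a lower $G_k$ (whose blue edges carry the over-weighted value $-n < -k$) reaches $2^{k}-1$ with weight at most $0$, strictly worse than the direct $+1$ edge, after which the previous theorem applies inside $G_n$. Your auxiliary induction $M_k = 1$ and the explicit bottleneck argument at \rrec{2^{k-1}-1} are just a more carefully justified version of the paper's one-line ``cheaper'' comparison.
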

\begin{proof}
	Note that if $n > k$, then the maximum weight path from \rrec{0} to \rrec{I_{t}^{k}} in $G_k$ is less than zero since all the blue edges are more negative, the red edges are unchanged, and at least one blue edge from \rrec{I_{t}^{k}-1} to \rrec{I_{t}^{k}} must be taken. Therefore, taking the edge from \rrec{0} to \rrec{II_{b}^{k}} and then a path to \rrec{I_{t}^{k}} is ``cheaper" than the edge from \rrec{0} to \rrec{II_{b}^{k+1}}. Hence the maximum weight path must take the edge from \rrec{0} to \rrec{II_{b}^{n}} and thus lies entirely within $G_n$.
\end{proof}

\begin{example}
	$G_2 \cup G_3 \cup G_4 \cup G_5$.
	
	\vspace*{0.5cm}
	\centering
	\begin{tikzpicture}[> = stealth, shorten > = 1pt, semithick, node distance=1.5cm]
	\tikzstyle{rec} = [draw=black, thick, rectangle, fill=white, align=center]
	\tikzstyle{crc} = [draw=black, thick, fill=white, circle]
	
	\node[crc](31){$31$};
	\node(E1)[left of=31]{$\cdots$};
	\node[crc](15)[left of=E1]{$15$};
	\node(E2)[left of=15]{$\cdots$};
	\node[crc](7)[left of=E2]{$7$};
	\node(E3)[left of=7]{$\cdots$};
	\node[crc](3)[left of=E3]{$3$};
	\node(E4)[left of=3]{$\cdots$};
	\node[crc](1)[left of=E4]{$1$};
	\node[crc](0)[below of=1, node distance=2cm]{$0$};
	
	\node[draw, fit=(15) (31)](G5){};
	\node[above of=G5, node distance=1cm](G5-label){$G_5$};
	\node[draw, fit=(7) (15), dashed](G4){};
	\node[above of=G4, node distance=1cm](G4-label){$G_4$};
	\node[draw, fit=(3) (7)](G3){};
	\node[above of=G3, node distance=1cm](G3-label){$G_3$};
	\node[draw, fit=(1) (3), dashed](G2){};
	\node[above of=G2, node distance=1cm](G2-label){$G_2$};
	
	\path[->, red] (0) edge (1);
	\path[->, bend right, red] (0) edge (3);
	\path[->, bend right, red] (0) edge (7);
	\path[->, bend right, red] (0) edge (15);
	\end{tikzpicture}
\end{example}

By construction of the graph for $\E_{1}$, Corollary \ref{cor-rk1-prune_1_mw} gives us $\displaystyle\underset{(R,C) \in \E_1(S_1)}{\max}\; \frac{C}{R} = n+1$. Thus for any $S \in S_{1}$, the number of cases checked $C$ is at worst $(n+1)R$, so at most $n\vert S \vert$ extraneous cases are checked.

\section{Prune 2}

We will show that $\displaystyle\underset{(R,C) \in E_2(S_2)}{\max}\; \frac{C}{R} = 2$.

\begin{example}
	Let $n = 3$. Then the graph representing $\E_{1}$ is
	
	\vspace*{0.5cm}
	{\centering
		\begin{tikzpicture}[> = stealth, shorten > = 1pt, semithick, node distance=1.5cm]
		\tikzstyle{crc} = [draw=black, thick, circle, fill=white]
		
		\node[crc](7){$7$};
		\node[crc](6)[left of=7]{$6$};
		\node[crc](5)[left of=6]{$5$};
		\node[crc](4)[left of=5]{$4$};
		\node[crc](3)[left of=4]{$3$};
		\node[crc](2)[left of=3]{$2$};
		\node[crc](1)[left of=2]{$1$};
		\node[crc](0)[below of=1]{$0$};
		
		\path[->, bend left, blue] (1) edge (2);
		\path[->, bend left, blue] (2) edge (3);
		\path[->, bend left, blue] (3) edge (4);
		\path[->, bend left, blue] (4) edge (5);
		\path[->, bend left, blue] (5) edge (6);
		\path[->, bend left, blue] (6) edge (7);
		
		\path[->, bend right, red] (0) edge (2);
		\path[->, bend right, red] (0) edge (3);
		\path[->, bend right, red] (3) edge (4);
		\path[->, bend right, red] (0) edge (5);
		\path[->, bend right, red] (0) edge (6);
		\path[->, red] (0) edge (1);
		\end{tikzpicture}
		
	}
	\noindent where blue lines have weight $-1$ and red lines have weight $+1$. For $S = \{6,7 \} \in S_{2}$, $\E_{2}(S)$ would start with $j = 7$, go to $6$, go to $5$, then skip to $0$, returning $(2, 3) = (2, 2\cdot 2 - 1)$, so this particular example finishes one under par for the max ratio $C/R$ of $2$. We can convert the steps of $\E_{2}$ on $S$ into a path in the graph above by reversing the order of the $j$ values. Traveling from \rrec{0} to \rrec{5} on the red edge, then to \rrec{6} on the blue edge, then to \rrec{7} on the blue edge gives a total weight of $1 + -1 + -1 = -1$, the par number for $S$.	
\end{example}

Again, it is useful to break the graph of $\E_{2}$ into several parts. Consider the weighted directed graph $G_k$ for $k \geq 2$ with nodes \rrec{0}, \rrec{2^{k-1}-1}, \rrec{2^{k-1}}, \ldots, \rrec{2^{k}-1} with edges based on the following rules.
\begin{itemize}
	\item An edge with weight $-1$ goes from \rrec{m} to \rrec{m+1} for $2^{k-1}-1 \leq m < 2^{k}-1$.
	
	\item An edge with weight $+1$ goes from \rrec{\Ptwo(m)} to \rrec{m} for $2^{k-1} -1 \leq m < 2^{k}-1$.
\end{itemize}
Note that \rrec{0} will have no incoming edges, \rrec{2^{k-1}-1} will have one incoming edge of weight $+1$, \rrec{2^{k-1}},\ldots,\rrec{2^{k}-2} will each have two incoming edges, one of weight $-1$ and one of weight $+1$, and \rrec{2^{k}-1} will have one incoming edge of weight $-1$. There are no other edges. Also note that edges go from a smaller number to a larger. Joining $G_2$, $G_3$, \ldots, $G_n$ together and examining the maximum weight path gives the worst case performance for $\E_{2}$.

\begin{example}\label{P2:G_2 - G_4}
	$G_2$, $G_3$, and $G_4$. Blue edges have weight $-1$ and red edges have weight $+1$. The graph for $\E_{2}$ when $n=4$ is $G_2 \cup G_3 \cup G_4$.
	
	\vspace*{0.5cm}
	\begin{minipage}{0.35\linewidth}
		\centering
		\begin{tikzpicture}[> = stealth, shorten > = 1pt, semithick, node distance=1.5cm]
		\tikzstyle{crc} = [draw=black, thick, circle, fill=white]
		
		\node[crc](3){$3$};
		\node[crc](2)[left of=3]{$2$};
		\node[crc](1)[left of=2]{$1$};
		\node[crc](0)[below of=1]{$0$};
		
		\path[->, bend left, blue] (1) edge (2);
		\path[->, bend left, blue] (2) edge (3);
		
		\path[->, bend right, red] (0) edge (2);
		\path[->, red] (0) edge (1);
		\end{tikzpicture}
	\end{minipage}
	\begin{minipage}{0.65\linewidth}
		\centering
		\begin{tikzpicture}[> = stealth, shorten > = 1pt, semithick, node distance=1.5cm]
		\tikzstyle{crc} = [draw=black, thick, circle, fill=white]
		
		\node[crc](7){$7$};
		\node[crc](6)[left of=7]{$6$};
		\node[crc](5)[left of=6]{$5$};
		\node[crc](4)[left of=5]{$4$};
		\node[crc](3)[left of=4]{$3$};
		\node[crc](0)[below of=3]{$0$};
		
		\path[->, bend left, blue] (3) edge (4);
		\path[->, bend left, blue] (4) edge (5);
		\path[->, bend left, blue] (5) edge (6);
		\path[->, bend left, blue] (6) edge (7);
		
		\path[->, bend right, red] (3) edge (4);
		\path[->, bend right, red] (0) edge (5);
		\path[->, bend right, red] (0) edge (6);
		\path[->, red] (0) edge (3);
		\end{tikzpicture}
	\end{minipage}
	
	\vspace*{1cm}
	\centering
	\begin{tikzpicture}[> = stealth, shorten > = 1pt, semithick, node distance=1.5cm]
	\tikzstyle{crc} = [draw=black, thick, circle, fill=white]
	
	\node[crc](15){$15$};
	\node[crc](14)[left of=15]{$14$};
	\node[crc](13)[left of=14]{$13$};
	\node[crc](12)[left of=13]{$12$};
	\node[crc](11)[left of=12]{$11$};
	\node[crc](10)[left of=11]{$10$};
	\node[crc](9)[left of=10]{$9$};
	\node[crc](8)[left of=9]{$8$};
	\node[crc](7)[left of=8]{$7$};
	\node[crc](0)[below of=7]{$0$};
	
	\path[->, bend left, blue] (7) edge (8);
	\path[->, bend left, blue] (8) edge (9);
	\path[->, bend left, blue] (9) edge (10);
	\path[->, bend left, blue] (10) edge (11);
	\path[->, bend left, blue] (11) edge (12);
	\path[->, bend left, blue] (12) edge (13);
	\path[->, bend left, blue] (13) edge (14);
	\path[->, bend left, blue] (14) edge (15);
	
	\path[->, bend right, red] (7) edge (8);
	\path[->, bend right, red] (7) edge (9);
	\path[->, bend right, red] (7) edge (10);
	\path[->, bend right, red] (0) edge (11);
	\path[->, bend right, red] (11) edge (12);
	\path[->, bend right, red] (0) edge (14);
	\path[->, bend right, red] (0) edge (13);
	\path[->, red] (0) edge (7);
	\end{tikzpicture}
\end{example}

\begin{example} \label{P2:G_5}
	$G_5$. Blue edges have weight $-1$ and red edges have weight $+1$.
	
	\vspace*{0.5cm}
	\centering
	\begin{tikzpicture}[> = stealth, shorten > = 1pt, semithick, node distance=1.5cm]
	\tikzstyle{crc} = [draw=black, thick, fill=white, circle]
	
	\node[crc](31){$31$};
	\foreach \x in {30,29,...,23}{
		\pgfmathtruncatemacro\y{\x+1}
		\node[crc](\x)[left of=\y]{$\x$};
	}
	
	\foreach \x in {23,...,30}{
		\pgfmathtruncatemacro\y{\x+1}
		\path[->, bend left, blue](\x) edge (\y);
	}
	
	\node[crc](22)[below of=30, node distance=4cm]{$22$};
	\foreach \x in {21,20,...,15}{
		\pgfmathtruncatemacro\y{\x+1}
		\node[crc](\x)[left of=\y]{$\x$};
	}
	\node[crc](0)[below left of=23]{$0$};
	
	\foreach \x in {15,...,21}{
		\pgfmathtruncatemacro\y{\x+1}
		\path[->, bend left, blue](\x) edge (\y);
	}
	
	\draw[->, blue, rounded corners] (22) |- ($ (27)!0.7!(19) $) -| (23);
	
	\path[->, bend right, red] (0) edge (30);
	\path[->, bend right, red] (0) edge (29);
	\path[->, bend right, red] (27) edge (28);
	\path[->, bend right, red] (0) edge (27);
	\path[->, bend right, red] (23) edge (26);
	\path[->, bend right, red] (23) edge (25);
	\path[->, bend right, red] (23) edge (24);
	\path[->, red] (0) edge (23);
	\path[->, bend right, red] (15) edge (22);
	\path[->, bend right, red] (15) edge (21);
	\path[->, bend right, red] (19) edge (20);
	\path[->, bend right, red] (15) edge (19);
	\path[->, bend right, red] (15) edge (18);
	\path[->, bend right, red] (15) edge (17);
	\path[->, bend right, red] (15) edge (16);
	\path[->, red] (0) edge (15);
	\end{tikzpicture}
	
\end{example}

\begin{definition}
	Let
	\begin{align*}
	I^k_t   & = 2^{k}-1 				  & III^k_t & = 2^{k}-2^{k-2}-2^{k-4}-2 \\
	I^k_b   & = 2^{k}-2^{k-2}-1 		  & III^k_b & = 2^{k}-2^{k-2}-2^{k-3}-1 \\
	II^k_t  & = 2^{k}-2^{k-2}-2 		  & IV^k_t  & = 2^{k}-2^{k-2}-2^{k-3}-2 \\
	II^k_b  & = 2^{k}-2^{k-2}-2^{k-4}-1 & IV^k_b  & = 2^{k-1}-1
	\end{align*}
	Then $G_k$ for $k \geq 4$ can be split into four ``boxes" as follows:
	\begin{itemize}
		\item $B_1$: The induced subgraph of $G_k$ by taking \rrec{0}, \rrec{I^k_b}, \ldots, \rrec{I^k_t}.
		
		\item $B_2$: The induced subgraph of $G_k$ by taking \rrec{0}, \rrec{II^k_b}, \ldots, \rrec{II^k_t}.
		
		\item $B_3$: The induced subgraph of $G_k$ by taking \rrec{0}, \rrec{III^k_b}, \ldots, \rrec{III^k_t}.
		
		\item $B_4$: The induced subgraph of $G_k$ by taking \rrec{0}, \rrec{IV^k_b}, \ldots, \rrec{IV^k_t}.
	\end{itemize}
\end{definition}

\begin{example} \label{P2:G_4 boxes ex}
	Boxes of $G_4$. Blue edges have weight $-1$ and red edges have weight $+1$. Dashed edges go between boxes.
	
	\vspace*{0.5cm}
	\centering
	\begin{tikzpicture}[> = stealth, shorten > = 1pt, semithick, node distance=1.5cm]
	\tikzstyle{crc} = [draw=black, thick, circle, fill=white]
	
	\node[crc](15){$15$};
	\node[crc](14)[left of=15]{$14$};
	\node[crc](13)[left of=14]{$13$};
	\node[crc](12)[left of=13]{$12$};
	\node[crc](11)[left of=12]{$11$};
	\node[crc](10)[left of=11]{$10$};
	\node[crc](9)[left of=10]{$9$};
	\node[crc](8)[left of=9]{$8$};
	\node[crc](7)[left of=8]{$7$};
	\node[crc](0I)[below of=11]{$0$};
	\node[crc](0II)[below of=10]{$0$};
	\node[crc](0III)[below of=9]{$0$};
	\node[crc](0IV)[below of=7]{$0$};
	
	\node[draw, fit=(15) (11) (0I)](I){};
	\node[above of=I, node distance=1.5cm](I-label){$B_1$};
	\node[draw, fit=(10) (0II)](II){};
	\node[above of=II, node distance=1.5cm](II-label){$B_2$};
	\node[draw, fit=(9) (0III)](III){};
	\node[above of=III, node distance=1.5cm](III-label){$B_3$};
	\node[draw, fit=(7) (8) (0IV)](IV){};
	\node[above of=IV, node distance=1.5cm](IV-label){$B_4$};
	
	\path[->, bend left, blue] (7) edge (8);
	\path[->, bend left, blue, dashed] (8) edge (9);
	\path[->, bend left, blue, dashed] (9) edge (10);
	\path[->, bend left, blue, dashed] (10) edge (11);
	\path[->, bend left, blue] (11) edge (12);
	\path[->, bend left, blue] (12) edge (13);
	\path[->, bend left, blue] (13) edge (14);
	\path[->, bend left, blue] (14) edge (15);
	
	\path[->, bend right, red] (7) edge (8);
	\path[->, bend right, red, dashed] (7) edge (9);
	\path[->, bend right, red, dashed] (7) edge (10);
	\path[->, red] (0I) edge (11);
	\path[->, bend right, red] (11) edge (12);
	\path[->, bend right, red] (0I) edge (14);
	\path[->, bend right, red] (0I) edge (13);
	\path[->, red] (0IV) edge (7);
	\end{tikzpicture}
\end{example}

\begin{example}\label{P2:G_5 boxes ex}
	Boxes of $G_5$. Blue edges have weight $-1$ and red edges have weight $+1$. Dashed edges go between boxes.
	
	\vspace*{0.5cm}
	\centering
	\begin{tikzpicture}[> = stealth, shorten > = 1pt, semithick, node distance=1.5cm]
	\tikzstyle{crc} = [draw=black, thick, fill=white, circle]
	
	\node[crc](31){$31$};
	\foreach \x in {30,29,...,23}{
		\pgfmathtruncatemacro\y{\x+1}
		\node[crc](\x)[left of=\y]{$\x$};
	}
	
	\node[crc](22)[below of=30, node distance=4.4cm]{$22$};
	\foreach \x in {21,20,...,15}{
		\pgfmathtruncatemacro\y{\x+1}
		\node[crc](\x)[left of=\y]{$\x$};
	}
	
	\node[crc](0I)[below left of=23]{$0$};
	\node(Ispacer)[below of=0I, node distance=1.3cm]{};
	\node[crc](0II)[below of=21, node distance=2.1cm]{$0$};
	\node[crc](0III)[below of=19, node distance=2.1cm]{$0$};
	\node[crc](0IV)[below of=15, node distance=2.1cm]{$0$};
	
	\foreach \x in {23,...,30}{
		\pgfmathtruncatemacro\y{\x+1}
		\path[->, bend left, blue](\x) edge (\y);
	}
	
	\foreach \x in {15,...,18}{
		\pgfmathtruncatemacro\y{\x+1}
		\path[->, bend left, blue](\x) edge (\y);
	}
	
	\path[->, bend left, blue, dashed](18) edge (19);
	\path[->, bend left, blue](19) edge (20);
	\path[->, bend left, blue, dashed](20) edge (21);
	\path[->, bend left, blue](21) edge (22);
	\draw[->, blue, rounded corners, dashed] (22) |- ($ (27)!0.7!(19) $) -| (23);
	
	\path[->, bend right, red] (0I) edge (30);
	\path[->, bend right, red] (0I) edge (29);
	\path[->, bend right, red] (27) edge (28);
	\path[->, bend right, red] (0I) edge (27);
	\path[->, bend right, red] (23) edge (26);
	\path[->, bend right, red] (23) edge (25);
	\path[->, bend right, red] (23) edge (24);
	\path[->, red] (0I) edge (23);
	\path[->, bend right, red, dashed] (15) edge (22);
	\path[->, bend right, red, dashed] (15) edge (21);
	\path[->, bend right, red] (19) edge (20);
	\path[->, bend right, red, dashed] (15) edge (19);
	\path[->, bend right, red] (15) edge (18);
	\path[->, bend right, red] (15) edge (17);
	\path[->, bend right, red] (15) edge (16);
	\path[->, red] (0IV) edge (15);
	
	\node[draw, fit=(31) (0I) (Ispacer)](I){};
	\node[above of=I, node distance=1.8cm](I-label){$B_1$};
	\node[draw, fit=(22) (0II)](II){};
	\node[above of=II, node distance=1.8cm](II-label){$B_2$};
	\node[draw, fit=(20) (0III)](III){};
	\node[above of=III, node distance=1.8cm](III-label){$B_3$};
	\node[draw, fit=(18) (0IV)](IV){};
	\node[above of=IV, node distance=1.8cm](IV-label){$B_4$};
	\end{tikzpicture}
\end{example}

\paragraph*{Recursive Structure}
$G_{k+1}$ can be constructed from $G_{k}$ using the following propositions.

\begin{proposition} \label{P2:Box I structure}
	$B_1$ of $G_{k+1}$ for $k \geq 3$ is the same as $G_{k}$ with $2^{k}$ added to every nonzero node except that \rrec{I_{b}^{k+1}} has a blue edge from \rrec{II_{t}^{k+1}}. 
\end{proposition}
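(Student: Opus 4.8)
The plan is to reuse the three-part template of Proposition \ref{P1:Box I structure}, checking in turn that the nodes, the blue edges, and the red edges of $B_1$ of $G_{k+1}$ agree with those of $G_k$ after the translation that fixes \rrec{0} and sends each nonzero node \rrec{m} to \rrec{m+2^k}. For the nodes I would just verify the two endpoints: the bottom node $2^{k-1}-1$ of $G_k$ maps to $2^{k-1}-1+2^k = 2^{k+1}-2^{k-1}-1 = I_b^{k+1}$, and the top node $2^k-1$ maps to $2^{k+1}-1 = I_t^{k+1}$, so since the nodes in between are consecutive integers in both graphs the translation is a bijection on node sets. For the blue edges, translation by $2^k$ preserves consecutiveness, so every blue edge of the translated $G_k$ reappears in $B_1$; the lone discrepancy is the edge entering \rrec{I_b^{k+1}}, which has no incoming blue edge in $G_k$ (it is the bottom node) but in $G_{k+1}$ receives one from \rrec{I_b^{k+1}-1}, and since $I_b^{k+1}-1 = 2^{k+1}-2^{k-1}-2 = II_t^{k+1}$, this is exactly the claimed exception.

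The substantive step is the red edges, where $\Ptwo$ behaves differently from $\Pone$. For each interior target $2^{k-1} \leq m \leq 2^k-2$ I would prove $\Ptwo(m+2^k) = \Ptwo(m)+2^k$, with the convention that $0$ maps to $0$. Such an $m$ has bit $k-1$ equal to one (Lemma \ref{leading_one}) and at least one zero among bits $0,\dots,k-2$, and adding $2^k$ only sets the previously-zero bit $k$. I would then split on the number of zeros among bits $0,\dots,k-2$: if there are at least two, the second zero from the right sits at a position $z \leq k-2$ that is untouched by setting bit $k$, so $\Ptwo$ zeroes the same low bits and the retained high part merely gains $2^k$, giving $\Ptwo(m+2^k) = \Ptwo(m)+2^k$ (and one checks the source $\Ptwo(m)$, being at least $2^{k-1}-1$, still lies in the box); if there is exactly one low zero, the second zero from the right is a padding bit---position $k$ for $m$ and position $k+1$ for $m+2^k$---so in both graphs nothing above it survives and $\Ptwo(m) = \Ptwo(m+2^k) = 0$, again consistent with $0$ being fixed.

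It remains to treat the bottom node itself, and this is where the proposition is genuinely lighter than its $\Pone$ analogue. Since $I_b^{k+1} = 2^{k+1}-2^{k-1}-1$ is a ``10'' followed by $k-1$ ones, its first zero from the right is at position $k-1$ and its second zero is a padding bit, whence $\Ptwo(I_b^{k+1}) = 0$; this coincides with the red edge of $G_k$ from \rrec{0} to \rrec{2^{k-1}-1} after translation, so no red edge needs to be listed as an exception and the extra blue edge is the only change. I expect the main obstacle to be precisely this red-edge bookkeeping: correctly identifying which zero counts as ``second from the right'' once the padding zeros interact with the newly set bit $k$, and confirming that the single-low-zero case collapses to \rrec{0} in both graphs.
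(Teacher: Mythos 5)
Your proposal is correct and follows essentially the same route as the paper's proof: verify the node range by computing the two endpoints, note that blue adjacency is preserved with the single new edge from \rrec{II_t^{k+1}} into \rrec{I_b^{k+1}}, and establish the red edges by splitting on whether $m$ has at least two genuine zeros (so $\Ptwo(m+2^k)=\Ptwo(m)+2^k$) or fewer (so both prunes collapse to $0$). Your separate treatment of the bottom node \rrec{I_b^{k+1}} is slightly more explicit than the paper, which absorbs it into the ``fewer than two zeros'' case, but the argument is the same.
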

\begin{proof}
	\begin{itemize}
		\item Nodes:
		
		It is sufficient to show that the top and bottom nodes of $G_k$ with $2^{k}$ added match the top and bottom nodes of $B_1$ of $G_{k+1}$ since all the nodes in between are sequential.
		\begin{align*}
		IV_{b}^{k} + 2^{k} & = (2^{k-1}-1) + 2^{k}     & I_{t}^{k} + 2^{k} & = (2^{k} - 1) + 2^{k} \\
		& = 2^{k+1} - (2^{k+1} - 2^{k} - 2^{k-1}) - 1  & & = 2^{k+1} - 1 \\
		& = 2^{k+1} - 2^{k-1} - 1 					   & & = I_{t}^{k+1} \\
		& = I_{b}^{k+1}								   & &
		\end{align*}
		
		\item Blue Edges:
		
		\rrec{m+2^{k}} and \rrec{(m+1)+2^{k}} are still adjacent. Also \rrec{I_{b}^{k+1}} and \rrec{II_{t}^{k+1}} are adjacent.
		
		\item Red Edges:
		
		Note that adding $2^{k}$ to $0 < m < 2^{k}$ is the same as left-appending a one to the $k$ digit binary representation of $m$. Consider two subcases.
		\begin{itemize}
			\item[$\circ$] $m$ has $< 2$ zeros in its $k$ digit binary representation where $2^{k-1}-1 \leq m < 2^{k} - 1$:
			
			Then $\Ptwo(m) = \Ptwo(m+2^{k}) = 0$.
			
			\item[$\circ$] $m$ has $\geq 2$ zeros in its $k$ digit binary representation where $2^{k-1}-1 \leq m < 2^{k} - 1$:
			
			By Lemma \ref{leading_one}, $m+2^{k}$ has a binary representation that starts with ``11." Thus $\Ptwo(m+2^{k}) = 2^{k}+\Ptwo(m)$ since the left-most one is unaffected by $\Ptwo$.
		\end{itemize}
	\end{itemize}
\end{proof}

\begin{example} \label{P2:Box I structure ex}
	$B_1$ of $G_4$ and $G_3$.
	
	\vspace*{0.5cm}
	\centering
	\begin{tikzpicture}[> = stealth, shorten > = 1pt, semithick, node distance=1.5cm]
	\tikzstyle{crc} = [draw=black, thick, circle, fill=white]
	
	\node[crc](15){$15$};
	\node[crc](14)[left of=15]{$14$};
	\node[crc](13)[left of=14]{$13$};
	\node[crc](12)[left of=13]{$12$};
	\node[crc](11)[left of=12]{$11$};
	\node[crc](10)[left of=11]{$10$};
	\node[crc](0I)[below of=11]{$0$};
	
	\node[draw, fit=(15) (11) (0I)](I){};
	\node[above of=I, node distance=1.5cm](I-label){$B_1$};
	
	\path[->, bend left, blue, dashed] (10) edge (11);
	\path[->, bend left, blue] (11) edge (12);
	\path[->, bend left, blue] (12) edge (13);
	\path[->, bend left, blue] (13) edge (14);
	\path[->, bend left, blue] (14) edge (15);
	
	\path[->, red] (0I) edge (11);
	\path[->, bend right, red] (11) edge (12);
	\path[->, bend right, red] (0I) edge (14);
	\path[->, bend right, red] (0I) edge (13);
	\end{tikzpicture}
	
	\vspace*{1cm}
	\centering
	\begin{tikzpicture}[> = stealth, shorten > = 1pt, semithick, node distance=1.5cm]
	\tikzstyle{crc} = [draw=black, thick, circle, fill=white]
	
	\node[crc](7){$7$};
	\node[crc](6)[left of=7]{$6$};
	\node[crc](5)[left of=6]{$5$};
	\node[crc](4)[left of=5]{$4$};
	\node[crc](3)[left of=4]{$3$};
	\node[crc](0)[below of=3]{$0$};
	
	\path[->, bend left, blue] (3) edge (4);
	\path[->, bend left, blue] (4) edge (5);
	\path[->, bend left, blue] (5) edge (6);
	\path[->, bend left, blue] (6) edge (7);
	
	\path[->, bend right, red] (3) edge (4);
	\path[->, bend right, red] (0) edge (5);
	\path[->, bend right, red] (0) edge (6);
	\path[->, red] (0) edge (3);
	\end{tikzpicture}
	
\end{example}

\begin{proposition} \label{P2:Box II structure}
	$B_2 \cup$ \rrec{IV^{k+1}_b} of $G_{k+1}$ for $k \geq 4$ is the same as $B_2 \cup B_3 \cup$ \rrec{IV^k_b} of $G_k$ with $3\cdot 2^{k-2}$ added to every node $III_{b}^{k} \leq m \leq II_{t}^{k}$ and adding $2^{k-1}$ to $IV^{k}_b$.
\end{proposition}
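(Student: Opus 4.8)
\emph{Proof proposal.} The plan is to follow the same three–part template as Proposition \ref{P2:Box I structure} (and its $\Pone$ analogues), checking in turn that the \textbf{nodes}, the \textbf{blue edges}, and the \textbf{red edges} of $B_2\cup\{IV_b^{k+1}\}$ in $G_{k+1}$ agree with those of $B_2\cup B_3\cup\{IV_b^k\}$ in $G_k$ under the stated relabeling: node $0\mapsto 0$, each block node $III_b^k\le m\le II_t^k$ goes to $m+3\cdot2^{k-2}$, and $IV_b^k\mapsto IV_b^k+2^{k-1}$. Since $III_t^k+1=II_b^k$, the range $III_b^k,\ldots,II_t^k$ is a single contiguous run, so for the nodes it suffices to match the two endpoints and the isolated node: a direct computation gives $III_b^k+3\cdot2^{k-2}=2^{k+1}-2^{k-1}-2^{k-3}-1=II_b^{k+1}$, $II_t^k+3\cdot2^{k-2}=2^{k+1}-2^{k-1}-2=II_t^{k+1}$, and $IV_b^k+2^{k-1}=2^{k}-1=IV_b^{k+1}$.

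For the blue edges, adding a constant to a contiguous run preserves adjacency, so the internal blue edges $m\to m+1$ of the block carry over to $II_b^{k+1},\ldots,II_t^{k+1}$. I will also note that in both subgraphs the $IV_b$ node and the bottom block node have no incoming blue edge inside the chosen node set (their blue predecessors $2^{k-1}-2$ and $III_b^k-1=IV_t^k$ on the right, and $2^{k}-2$ and $II_b^{k+1}-1=III_t^{k+1}$ on the left, all lie outside), so the blue structures match.

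The red edges are the crux. I first record the binary shapes: by Lemma \ref{leading_10} every block node is $m=10w$ (a one in bit $k-1$, a zero in bit $k-2$, with $(k-2)$–bit tail $w$), and by Lemma \ref{leading_101} every shifted node starts with $101$; the endpoint computation shows that adding $3\cdot2^{k-2}$ clears bit $k-1$, sets bit $k$, and sets bit $k-2$, i.e.\ it acts on binary strings by $10w\mapsto 101w$, fixing the tail. Because $m\le II_t^k<2^{k}-2^{k-2}-1$, the tail $w$ is never all ones, so I split on its number of zeros. If $w$ has exactly one zero, the structural zero (bit $k-2$ of $10w$, bit $k-1$ of $101w$) is the second zero from the right, $\Ptwo$ collapses everything below it, and $\Ptwo(10w)=2^{k-1}-1=IV_b^k$ while $\Ptwo(101w)=2^{k}-1=IV_b^{k+1}$; thus a red edge from $IV_b^k$ maps to one from $IV_b^{k+1}$. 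If $w$ has at least two zeros, the second zero from the right lies inside $w$ at some position $p$; here $\Ptwo$ leaves the leading $10$ resp.\ $101$ untouched and acts identically on the tail, and since $3\cdot2^{k-2}$ is a multiple of $2^{p+1}$ the ``subtract one'' commutes with the shift, giving $\Ptwo(101w)=\Ptwo(10w)+3\cdot2^{k-2}$. Finally $\Ptwo(IV_b^k)=\Ptwo(IV_b^{k+1})=0$, so the red edge from node $0$ into the $IV_b$ node is preserved.

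I expect the main obstacle to be this second red-edge case, where one must simultaneously argue (a) that the second zero from the right genuinely sits inside the tail $w$, (b) that the borrow in the ``$-1$'' does not reach the leading bits and commutes with adding $3\cdot2^{k-2}$, and (c) that the image $\Ptwo(10w)$ does not drop below $III_b^k$ out of the block. All three reduce to one observation: block membership together with $w$ having $\ge 2$ zeros forces the top tail bit (bit $k-3$) to be one, since otherwise $m\le III_b^k$ with equality only in the one-zero case. This caps $p$ at $k-4$, whence $p+1\le k-3$ (so $2^{p+1}\mid 3\cdot2^{k-2}$) and $\Ptwo(10w)\ge 101\underbrace{0\cdots0}_{k-3}{}-1=2^{k-1}+2^{k-3}-1=III_b^k$; once this bound is in hand the remainder is bookkeeping.
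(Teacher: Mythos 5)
Your proposal is correct and follows essentially the same route as the paper: verify the endpoint arithmetic for the nodes, note that blue adjacency is translation-invariant, and split the red edges according to whether the second zero from the right of $m$ is the structural zero in position $k-2$ (sending $\Ptwo(m)=IV_b^k$ to $\Ptwo(m+3\cdot 2^{k-2})=IV_b^{k+1}$) or lies strictly inside the tail (so that $\Ptwo$ commutes with adding $3\cdot 2^{k-2}$). The only cosmetic differences are that you treat $III_b^k$ uniformly via the ``$10w$'' form from Lemma \ref{leading_10} where the paper peels it off separately using Lemma \ref{leading_100}, and you additionally check that the red-edge sources do not fall below $III_b^k$, a point the paper leaves implicit.
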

\begin{proof}
	\begin{itemize}
		\item Nodes:
		
		It is sufficient to show that the top and bottom nodes of $B_2 \cup B_3$ of $G_k$ with $3\cdot 2^{k-2}$ added match the top and bottom nodes of $B_2$ of $G_{k+1}$, since all the nodes in between are sequential, and that $IV_{b}^k + 2^{k-1} = IV_{b}^{k+1}$.
		\begin{align*}
		III_{b}^{k} + 3\cdot 2^{k-2} & = (2^{k} - 2^{k-2} - 2^{k-3} - 1) +  3\cdot 2^{k-2} \\
		& = (2^{k} - 2^{k-2} - 2^{k-3} - 1) + (2^{k-1} + 2^{k-2}) \\
		& = 2^{k+1} - (2^{k+1} - 2^{k} - 2^{k-1} ) - 2^{k-3} - 1 \\
		& = 2^{k+1} - 2^{k-1} - 2^{k-3} - 1 \\
		& = II_{b}^{k+1}
		\end{align*}
		
		\begin{align*}
		II_{t}^{k} +  3\cdot 2^{k-2} & = (2^{k} - 2^{k-2} - 2) +  3\cdot 2^{k-2} \\
		& = (2^{k} - 2^{k-2} - 2) + (2^{k-1} + 2^{k-2}) \\
		& = 2^{k+1} - (2^{k+1} - 2^{k} - 2^{k-1}) - 2 \\
		& = 2^{k+1} - 2^{k-1} - 2 \\
		& = II_{t}^{k+1}
		\end{align*}
		
		\begin{align*}
		IV_{b}^{k} + 2^{k-1} & = (2^{k-1} - 1) + 2^{k-1} \\
		& = 2^{k} - 1 \\
		& = IV_{b}^{k+1}
		\end{align*}
		
		\item Blue Edges:
		
		\rrec{m + 3\cdot 2^{k-2}} and \rrec{(m+1) + 3\cdot 2^{k-2}} are still adjacent.
		
		\item Red Edges:
		
		By Lemma \ref{leading_101}, every node in $B_2 \cup B_3$ of $G_k$ except \rrec{III_{b}^{k}} starts with ``101" and has at least one more zero. Adding $3\cdot 2^{k-2} = 2^{k-1} + 2^{k}$ means that these nodes now have $k+1$ digits and start with ``1011." Consider two subcases.
		\begin{itemize}
			\item[$\circ$] $m$ has $2$ zeros in its $k$ digit binary representation where 
			$III_{b}^{k} < m \leq II_{t}^{k}$:
			
			Then $\Ptwo(m) = IV_{b}^{k}$ and $\Ptwo(m + 3\cdot 2^{k-2}) = IV_{b}^{k+1}$.
			
			\item[$\circ$] $m$ has $>2$ zeros in its $k$ digit binary representation where $III_{b}^{k} < m \leq II_{t}^{k}$:
			
			Then $\Ptwo(m + 3\cdot2^{k-2}) = \Ptwo(m) + 3\cdot 2^{k-2}$ since the leading ``10" of $m$ is unaffected by $\Ptwo$.
		\end{itemize}
		
		By Lemma \ref{leading_100}, $III_{b}^{k} + 3\cdot 2^{k-2} = III_{b}^{k} + 2^{k-1} + 2^{k-2}$ starts with ``1010" followed by $k-3$ ones. Thus $\Ptwo(III_{b}^{k}) = IV_{b}^{k}$ and $\Ptwo(III_{b}^{k} + 3\cdot 2^{k-2}) = IV_{b}^{k+1}$.
	\end{itemize}
\end{proof}

\begin{example} \label{P2:Box II structure ex}
	$B_2 \cup$ \rrec{31} of $G_6$ and $B_2 \cup B_3 \cup$ \rrec{15} of $G_5$.
	
	\vspace*{0.5cm}
	\centering
	\begin{tikzpicture}[> = stealth, shorten > = 1pt, semithick, node distance=1.5cm]
	\tikzstyle{crc} = [draw=black, thick, fill=white, circle]
	
	\node[crc](46){$46$};
	\node[crc](45)[left of=46]{$45$};
	\node[crc](44)[left of=45]{$44$};
	\node[crc](43)[left of=44]{$43$};
	\node[crc](42)[left of=43]{$42$};
	\node[crc](31)[left of=42, node distance=2cm]{$31$};
	\node[crc](0)[below of=43, node distance=2cm]{$0$};
	
	\node[draw, fit=(46) (43) (0)](II){};
	\node[above of=II, node distance=1.8cm](II-label){$B_2$};
	
	\path[->, bend left, blue, dashed] (42) edge (43);
	\path[->, bend left, blue] (43) edge (44);
	\path[->, bend left, blue] (44) edge (45);
	\path[->, bend left, blue] (45) edge (46);
	
	\path[->, bend right, red] (43) edge (44);
	\path[->, bend right, red, dashed] (31) edge (43);
	\path[->, bend right, red, dashed] (31) edge (45);
	\path[->, bend right, red, dashed] (31) edge (46);
	\end{tikzpicture}
	
	\vspace*{1cm}
	\centering
	\begin{tikzpicture}[> = stealth, shorten > = 1pt, semithick, node distance=1.5cm]
	\tikzstyle{crc} = [draw=black, thick, circle, fill=white]
	
	\node[crc](22){$22$};
	\node[crc](21)[left of=22]{$21$};
	\node[crc](20)[left of=21]{$20$};
	\node[crc](19)[left of=20]{$19$};
	\node[crc](18)[left of=19]{$18$};
	\node[crc](15)[left of=18, node distance=2cm]{$15$};
	\node[crc](0II)[below of=21, node distance=2cm]{$0$};
	\node[crc](0III)[below of=19, node distance=2cm]{$0$};
	
	\node[draw, fit=(22) (21) (0II)](II){};
	\node[above of=II, node distance=1.8cm](II-label){$B_2$};
	\node[draw, fit=(20) (19) (0III)](III){};
	\node[above of=III, node distance=1.8cm](III-label){$B_3$};
	
	\path[->, bend left, blue] (21) edge (22);
	\path[->, bend left, blue, dashed] (20) edge (21);
	\path[->, bend left, blue] (19) edge (20);
	\path[->, bend left, blue, dashed] (18) edge (19);
	
	\path[->, bend right, red] (19) edge (20);
	\path[->, bend right, red, dashed] (15) edge (19);
	\path[->, bend right, red, dashed] (15) edge (21);
	\path[->, bend right, red, dashed] (15) edge (22);
	\end{tikzpicture}
	
\end{example}

\begin{proposition} \label{P2:Box III structure}
	$B_3$ of $G_{k+1}$ for $k \geq 4$ is the same as $B_4$ of $G_k$ with $3\cdot 2^{k-2}$ added to every node $IV_{b}^{k} \leq m \leq IV_{t}^{k}$ except \rrec{III^{k+1}_b} has a red edge from \rrec{IV^{k+1}_b} instead of \rrec{0} and a new blue edge from \rrec{IV^{k+1}_t}.
\end{proposition}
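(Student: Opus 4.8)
The plan is to follow the three-part template established in Propositions~\ref{P2:Box I structure} and~\ref{P2:Box II structure}: first match the nodes under the shift, then the blue edges, and finally the red edges, with the red edges being where essentially all of the work lies.

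For the nodes, I would record the two endpoint identities $IV_b^k + 3\cdot 2^{k-2} = (2^{k-1}-1) + (2^{k-1}+2^{k-2}) = 2^k + 2^{k-2} - 1 = III_b^{k+1}$ and $IV_t^k + 3\cdot 2^{k-2} = 2^{k+1} - 2^{k-1} - 2^{k-3} - 2 = III_t^{k+1}$, which, since the intermediate nodes are sequential, shows that adding $3\cdot 2^{k-2}$ carries the nonzero nodes of $B_4$ of $G_k$ exactly onto the nodes of $B_3$ of $G_{k+1}$. For the blue edges, the uniform shift preserves consecutiveness, so every weight $-1$ adjacency inside the box survives; the one extra feature is that $III_b^{k+1} = IV_t^{k+1} + 1$, which produces the advertised new blue edge from \rrec{IV_t^{k+1}} into \rrec{III_b^{k+1}}, an edge with no counterpart in $B_4$ of $G_k$ since there the bottom node $IV_b^k$ has no incoming blue edge.

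For the red edges I would argue as follows. By Lemma~\ref{leading_100}, each non-bottom node $2^{k-1} \le m \le IV_t^k$ of $B_4$ begins with ``100,'' so $m = 2^{k-1} + c$ with $0 \le c \le 2^{k-3}-2$. Because $c$ is never all ones, its $(k-3)$-bit block contains a zero, so the second zero of $m$ from the right sits at or below bit $k-3$. Adding $3\cdot 2^{k-2} = 2^{k-1}+2^{k-2}$ rewrites the prefix ``100'' as ``1010'' while leaving the block $c$ and bit $k-3$ untouched; hence the position of the second zero from the right is unchanged, and the bits that $\Ptwo$ retains differ by exactly $3\cdot 2^{k-2}$. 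This yields $\Ptwo(m + 3\cdot 2^{k-2}) = \Ptwo(m) + 3\cdot 2^{k-2}$, so every red edge of $B_4$ not meeting the bottom node is transported verbatim. The exception is the bottom node itself: $IV_b^k = 2^{k-1}-1$ is all ones, so $\Ptwo(IV_b^k) = 0$ and its incoming red edge starts at \rrec{0}; but its image $III_b^{k+1} = 2^k + 2^{k-2} - 1$ has binary ``100'' followed by $k-2$ ones, whose second zero from the right sits at bit $k-1$, giving $\Ptwo(III_b^{k+1}) = 2^k - 1 = IV_b^{k+1}$. This is precisely the claim that \rrec{III_b^{k+1}} receives its red edge from \rrec{IV_b^{k+1}} rather than from \rrec{0}.

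The main obstacle is this red-edge bookkeeping: verifying that for every node strictly below $III_b^k$ the second zero from the right avoids bit $k-2$ — which is exactly the condition $c \neq 2^{k-3}-1$ that keeps these nodes inside $B_4$ and makes $\Ptwo$ commute with the shift — while separately isolating the single all-ones bottom node whose structure breaks that commutativity and accounts for the change of source from \rrec{0} to \rrec{IV_b^{k+1}}.
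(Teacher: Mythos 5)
Your proof is correct and follows essentially the same route as the paper: verify the endpoint identities for the nodes, note that the uniform shift preserves blue adjacencies and that $III_b^{k+1} = IV_t^{k+1}+1$ produces the new blue edge, and use the ``100'' prefix from Lemma~\ref{leading_100} to show $\Ptwo$ commutes with adding $3\cdot 2^{k-2}$ on every non-bottom node, with the all-ones bottom node $IV_b^k$ as the lone exception whose red edge source moves from \rrec{0} to \rrec{IV_b^{k+1}}. The only difference is cosmetic: by tracking the position of the second zero from the right directly you merge the paper's two red-edge subcases ($\Ptwo(m)$ starting with ``100'' versus $\Ptwo(m)=IV_b^{k}$) into one uniform computation.
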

\begin{proof}
	\begin{itemize}
		\item Nodes:
		
		It is sufficient to show that the top and bottom nodes of $B_4$ of $G_k$ with $3\cdot 2^{k-2}$ added match the top and bottom nodes of $B_3$ of $G_{k+1}$ since all the nodes in between are sequential.
		\begin{align*}
		IV_{b}^{k} + 3\cdot 2^{k-2} & = (2^{k-1} - 1) + 3\cdot 2^{k-2} \\
		& = (2^{k-1} - 1) + (2^{k-1} + 2^{k-2}) \\
		& = 2^{k} + 2^{k-2} - 1 \\
		& = 2^{k+1} - (2^{k+1} - 2^{k}) + 2^{k-2} - 1 \\
		& = 2^{k+1} - 2^{k} + 2^{k-2} - 1 \\
		& = 2^{k+1} - (2^{k-1} + 2^{k-1}) + 2^{k-2} - 1 \\
		& = 2^{k+1} - 2^{k-1} - (2^{k-1} - 2^{k-2}) - 1 \\
		& = 2^{k+1} - 2^{k-1} - 2^{k-2} - 1 \\
		& = III_{b}^{k+1}
		\end{align*}
		
		\begin{align*}
		IV_{t}^{k} + 3\cdot 2^{k-2} & = (2^{k} - 2^{k-2} - 2^{k-3} - 2) + 3\cdot 2^{k-2} \\
		& = (2^{k} - 2^{k-2} - 2^{k-3} - 2) + (2^{k-1} + 2^{k-2}) \\
		& = 2^{k} + 2^{k-1} - 2^{k-3} - 2 \\
		& = 2^{k+1} - (2^{k+1} - 2^{k} - 2^{k-1}) - 2^{k-3} - 2 \\
		& = 2^{k+1} - 2^{k-1} - 2^{k-3} - 2 \\
		& = III_{t}^{k+1}
		\end{align*}
		
		\item Blue Edges:
		
		\rrec{m + 3\cdot 2^{k-2}} and \rrec{(m+1) + 3\cdot 2^{k-2}} are still adjacent. Also, \rrec{IV_{t}^{k+1}} and \rrec{III_{b}^{k+1}} are adjacent.
		
		\item Red Edges:
		
		By Lemma \ref{leading_100}, the $k$ digit binary representation for $2^{k-1} \leq m \leq IV_{t}^{k}$ starts with ``100." Thus $m + 3\cdot 2^{k-2}$ has $k+1$ digits and starts with ``1010." Consider two cases.
		\begin{itemize}
			\item[$\circ$] The $k$ digit binary representation of $\Ptwo(m)$ for $2^{k-1} \leq m \leq IV_{t}^{k}$ starts with ``100":
			
			Then $\Ptwo(m + 3\cdot 2^{k-2}) = \Ptwo(m) + 3\cdot 2^{k-2}$.
			
			\item[$\circ$] The $k$ digit binary representation of $\Ptwo(m)$ for $2^{k-1} \leq m \leq IV_{t}^{k}$ is  ``011\ldots1" = $IV_{b}^{k}$:
			
			Then $\Ptwo(m + 3\cdot 2^{k-2}) = ``10011\ldots1" = III_{b}^{k+1}$.
		\end{itemize}
		
		Also, $\Ptwo(III_{b}^{k+1}) = ``011\ldots1 = IV_{b}^{k+1}$.
	\end{itemize}
\end{proof}

\begin{example} \label{P2:Box III structure ex}
	$B_3$ of $G_6$ and $B_4$ of $G_5$.
	
	\vspace*{0.5cm}
	\centering
	\begin{tikzpicture}[> = stealth, shorten > = 1pt, semithick, node distance=1.5cm]
	\tikzstyle{crc} = [draw=black, thick, fill=white, circle]
	
	\node[crc](42){$42$};
	\node[crc](41)[left of=42]{$41$};
	\node[crc](40)[left of=41]{$40$};
	\node[crc](39)[left of=40]{$39$};
	\node[crc](38)[left of=39]{$38$};
	\node[crc](31)[left of=38, node distance=2cm]{$31$};
	\node[crc](0)[below of=39]{$0$};
	
	\node[draw, fit=(42) (39) (0)](III){};
	\node[above of=III, node distance=1.5cm](III-label){$B_3$};
	
	\path[->, bend left, blue] (39) edge (40);
	\path[->, bend left, blue] (40) edge (41);
	\path[->, bend left, blue] (41) edge (42);
	\path[->, bend left, blue, dashed] (38) edge (39);
	
	\path[->, bend right, red] (39) edge (40);
	\path[->, bend right, red] (39) edge (41);
	\path[->, bend right, red] (39) edge (42);
	\path[->, bend right, red, dashed] (31) edge (39);
	\end{tikzpicture}
	
	\vspace*{1cm}
	\centering
	\begin{tikzpicture}[> = stealth, shorten > = 1pt, semithick, node distance=1.5cm]
	\tikzstyle{crc} = [draw=black, thick, circle, fill=white]
	
	\node[crc](18){$18$};
	\node[crc](17)[left of=18]{$17$};
	\node[crc](16)[left of=17]{$16$};
	\node[crc](15)[left of=16]{$15$};
	\node[crc](0)[below of=15]{$0$};
	
	\node[draw, fit=(18) (15) (0)](IV){};
	\node[above of=IV, node distance=1.5cm](IV-label){$B_4$};
	
	\path[->, bend left, blue] (15) edge (16);
	\path[->, bend left, blue] (16) edge (17);
	\path[->, bend left, blue] (17) edge (18);
	
	\path[->, bend right, red] (15) edge (16);
	\path[->, bend right, red] (15) edge (17);
	\path[->, bend right, red] (15) edge (18);
	\path[->, red] (0) edge (15);
	\end{tikzpicture}
\end{example}

\begin{proposition} \label{P2:Box IV structure}
	$B_4$ of $G_{k+1}$ for $k \geq 4$ is the same as $B_2 \cup B_3 \cup B_4$ of $G_{k}$ with $2^{k-1}$ added to every nonzero node.
\end{proposition}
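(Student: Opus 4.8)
The plan is to follow the three-part template (Nodes, Blue Edges, Red Edges) used in Propositions \ref{P2:Box I structure}--\ref{P2:Box III structure}, with the underlying bijection being the shift $m \mapsto m + 2^{k-1}$ on every nonzero node (and $0 \mapsto 0$). First I would pin down the node ranges. The combined box $B_2 \cup B_3 \cup B_4$ of $G_k$ is the induced subgraph on \rrec{0} together with the contiguous block $\rrec{IV^k_b}, \ldots, \rrec{II^k_t}$ (its three pieces abut, since $IV^k_t + 1 = III^k_b$ and $III^k_t + 1 = II^k_b$), while $B_4$ of $G_{k+1}$ is the induced subgraph on \rrec{0} together with $\rrec{IV^{k+1}_b}, \ldots, \rrec{IV^{k+1}_t}$. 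I would then verify the two endpoint identities $IV^k_b + 2^{k-1} = IV^{k+1}_b$ and $II^k_t + 2^{k-1} = IV^{k+1}_t$ by the same algebra as in the earlier propositions; since both blocks are contiguous and of equal length $2^{k-2}$, the shift is a node bijection.

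For the blue edges the argument is immediate: adding the constant $2^{k-1}$ preserves adjacency, so each blue edge $\rrec{m} \to \rrec{m+1}$ of the combined box maps to the blue edge $\rrec{m+2^{k-1}} \to \rrec{m+1+2^{k-1}}$ of $B_4$ in $G_{k+1}$. I would also note the boundary bookkeeping: the bottom node $\rrec{IV^{k+1}_b} = \rrec{2^k-1}$ has no incoming blue edge in $G_{k+1}$ (blue edges there require source $\geq 2^k - 1$), matching the fact that $\rrec{IV^k_b} = \rrec{2^{k-1}-1}$ has no incoming blue edge in $G_k$.

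The red edges are the crux, and here I would argue from the binary representations. The bottom node $IV^k_b = 2^{k-1}-1$ is a leading zero followed by $k-1$ ones, and its image $IV^{k+1}_b = 2^k-1$ is a leading zero followed by $k$ ones; each has only a single zero before padding, so the second zero from the right lies in the pad and $\Ptwo(IV^k_b) = \Ptwo(IV^{k+1}_b) = 0$, meaning both receive their red edge from \rrec{0}. Every other nonzero node $2^{k-1} \le m \le II^k_t$ starts with ``10'' by Lemma \ref{leading_10}, and adding $2^{k-1}$ carries the leading one up, turning the prefix ``10'' into ``100'' while leaving the low $k-2$ bits unchanged (equivalently $m + 2^{k-1} = 2^k + (m - 2^{k-1})$ with $m - 2^{k-1} < 2^{k-2}$). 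I would then split on the location of the second zero from the right. Because $m \le II^k_t = 3\cdot 2^{k-2}-2$, the low $k-2$ bits are never all ones, so that block contributes at least one zero: if it contributes two or more, $\Ptwo$ cuts strictly inside the low bits and the untouched prefix change ``10''$\to$``100'' gives $\Ptwo(m + 2^{k-1}) = \Ptwo(m) + 2^{k-1}$; if it contributes exactly one, the second zero is the zero of the ``10'' prefix, forcing $\Ptwo(m) = IV^k_b$ and $\Ptwo(m + 2^{k-1}) = IV^{k+1}_b = IV^k_b + 2^{k-1}$. In either subcase $\Ptwo(m+2^{k-1}) = \Ptwo(m)+2^{k-1}$, and since the kept prefix always retains the leading one, $\Ptwo(m) \ne 0$ for these nodes, so no spurious edge from \rrec{0} is introduced.

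The main obstacle I anticipate is precisely this last case analysis: one must confirm that stopping the range exactly at $II^k_t$ (rather than $2^k - 2^{k-2}-1$) is what excludes the all-ones low block, and must check that each target $\Ptwo(m)$ lands back inside $[IV^k_b, II^k_t]$ so the edge stays internal — which follows since $\Ptwo(m) < m \le II^k_t$ and, for a leading-one $m$, the kept prefix is at least $2^{k-1}$, giving $\Ptwo(m) \ge IV^k_b$. The endpoint algebra and the blue-edge bookkeeping are routine by comparison with the preceding propositions.
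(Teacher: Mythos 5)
Your proposal is correct and follows essentially the same route as the paper: the same Nodes/Blue/Red template, the same endpoint algebra $IV^k_b + 2^{k-1} = IV^{k+1}_b$ and $II^k_t + 2^{k-1} = IV^{k+1}_t$, and the same use of Lemma \ref{leading_10} to reduce the red edges to a two-case analysis of where the second zero from the right sits (your split by the number of zeros in the low $k-2$ bits is equivalent to the paper's split on whether $\Ptwo(m)$ starts with ``10'' or equals $IV^k_b$). The extra checks you flag --- contiguity of $B_2\cup B_3\cup B_4$, that $\Ptwo(m)$ stays in range, and the $\Ptwo(IV^{k+1}_b)=0$ boundary case --- are all sound and only make explicit what the paper leaves implicit.
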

\begin{proof}
	\begin{itemize}
		\item Nodes:
		
		It is sufficient to show that the top and bottom nodes of $B_2 \cup B_3 \cup B_4$ of $G_k$ with $2^{k-1}$ added match the top and bottom nodes of $B_4$ of $G_{k+1}$ since all the nodes in between are sequential.
		\begin{align*}
		IV_{b}^{k} + 2^{k} & = (2^{k-1} - 1) + 2^{k-1} \\
		& =  2^{k} - 1 \\
		& = IV_{b}^{k+1}
		\end{align*}
		
		\begin{align*}
		II_{t}^{k} + 2^{k-1} & = (2^{k} - 2^{k-2} - 2) + 2^{k-1} \\
		& = 2^{k+1} - (2^{k+1} - 2^{k} - 2^{k-1}) - 2^{k-2} - 2 \\
		& = 2^{k+2} - 2^{k-1} - 2^{k-2} - 2 \\
		& = IV_{t}^{k+1}
		\end{align*}
		
		\item Blue Edges:
		
		\rrec{m + 2^{k-1}} and \rrec{(m+1) + 2^{k-1}} are still adjacent.
		
		\item Red Edges:
		
		By Lemma \ref{leading_10}, every node in $B_2 \cup B_3 \cup B_4$ of $G_k$ except \rrec{IV_{b}^{k}} has $k$ digits and starts with ``10." Adding $2^{k-1}$ means they will have $k+1$ digits and start with ``100." Consider two cases.
		\begin{itemize}
			\item[$\circ$] The $k$ digit representation of $\Ptwo(m)$ for $IV_{b}^{k} < m \leq II_{t}^{k}$ starts with ``10":
			
			Then $\Ptwo(m + 2^{k-1}) = \Ptwo(m) + 2^{k-1}$.
			
			\item[$\circ$] The $k$ digit representation of $\Ptwo(m)$ for $IV_{b}^{k} < m \leq II_{t}^{k}$ is $``011\ldots1" = IV_{b}^{k}$:
			
			Then $\Ptwo(m + 2^{k-1}) = IV_{b}^{k+1}$.
		\end{itemize}
		
		Also, $\Ptwo(IV_{b}^{k+1}) = 0$.
	\end{itemize}
\end{proof}

\begin{example} \label{P2:Box IV structure ex}
	$B_4$ of $G_6$ and $B_2 \cup B_3 \cup B_4$ of $G_{5}$.
	
	\vspace*{0.5cm}
	\centering
	\begin{tikzpicture}[> = stealth, shorten > = 1pt, semithick, node distance=1.5cm]
	\tikzstyle{crc} = [draw=black, thick, fill=white, circle]
	
	\node[crc](38){$38$};
	\foreach \x in {37,36,...,31}{
		\pgfmathtruncatemacro\y{\x+1}
		\node[crc](\x)[left of=\y]{$\x$};
	}
	\node[crc](0)[below of=31]{$0$};
	
	\foreach \x in {31,...,37}{
		\pgfmathtruncatemacro\y{\x+1}
		\path[->, bend left, blue](\x) edge (\y);
	}
	
	\node[draw, fit=(38) (31) (0)](IV){};
	\node[above of=IV, node distance=1.5cm](IV-label){$B_4$};
	
	\foreach \x in {32,...,35}
	\path[->, bend right, red](31) edge (\x);
	
	\path[->, bend right, red](35) edge (36);
	\path[->, bend right, red](31) edge (37);
	\path[->, bend right, red](31) edge (38);
	\path[->, red](0) edge (31);
	\end{tikzpicture}
	
	\vspace*{1cm}
	\centering
	\begin{tikzpicture}[> = stealth, shorten > = 1pt, semithick, node distance=1.5cm]
	\tikzstyle{crc} = [draw=black, thick, fill=white, circle]
	
	\node[crc](22){$22$};
	\foreach \x in {21,20,...,15}{
		\pgfmathtruncatemacro\y{\x+1}
		\node[crc](\x)[left of=\y]{$\x$};
	}
	\node[crc](0II)[below of=21, node distance=2.1cm]{$0$};
	\node[crc](0III)[below of=19, node distance=2.1cm]{$0$};
	\node[crc](0IV)[below of=15, node distance=2.1cm]{$0$};
	
	\node[draw, fit=(22) (21) (0II)](II){};
	\node[above of=II, node distance=1.8cm](II-label){$B_2$};
	\node[draw, fit=(20) (19) (0III)](III){};
	\node[above of=III, node distance=1.8cm](III-label){$B_3$};
	\node[draw, fit=(18) (15) (0IV)](IV){};
	\node[above of=IV, node distance=1.8cm](IV-label){$B_4$};
	
	\path[->, bend left, blue](15) edge (16);
	\path[->, bend left, blue](16) edge (17);
	\path[->, bend left, blue](17) edge (18);
	\path[->, bend left, blue, dashed](18) edge (19);
	\path[->, bend left, blue](19) edge (20);
	\path[->, bend left, blue, dashed](20) edge (21);
	\path[->, bend left, blue](21) edge (22);
	
	\path[->, bend right, red](15) edge (16);
	\path[->, bend right, red](15) edge (17);
	\path[->, bend right, red](15) edge (18);
	\path[->, bend right, red, dashed](15) edge (19);
	\path[->, bend right, red](19) edge (20);
	\path[->, bend right, red, dashed](15) edge (21);
	\path[->, bend right, red, dashed](15) edge (22);
	\path[->, red](0IV) edge (15);
	\end{tikzpicture}
\end{example}

\vspace*{1cm}

We can make some additional observations about the structure of $G_k$.

\begin{proposition} \label{P2:Box II-IV zero edge}
	$B_2 \cup B_3$ of $G_k$ has no edge from \rrec{0} and $B_4$ of $G_k$ has only one edge from \rrec{0} which goes to \rrec{IV_{b}^{k}} for $k \geq 4$.
\end{proposition}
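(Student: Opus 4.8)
The plan is to translate the phrase ``edge from \rrec{0}'' into a statement about the pruning function and then read it off the leading-bit lemmas. By the construction of $G_k$, a red edge runs from \rrec{0} to \rrec{m} exactly when $\Ptwo(m) = 0$ for some $m$ with $2^{k-1}-1 \leq m < 2^{k}-1$, so the whole proposition reduces to locating the nodes $m$ in each box with $\Ptwo(m) = 0$. The first step is the key characterization: $\Ptwo(m) = 0$ if and only if the binary representation of $m$ contains at most one zero. Indeed, $\Ptwo$ zeroes out everything to the right of the second zero from the right and subtracts one. If $m$ has at least two zeros, that second zero sits at a genuine bit position strictly below the most significant one, so zeroing to its right preserves the leading one; the zeroed value is then at least $2$ and $\Ptwo(m) \geq 1 > 0$. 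If $m$ has at most one zero, the second zero from the right is a padding zero beyond the most significant bit, so zeroing to its right annihilates all of $m$ and $\Ptwo(m) = 0$.

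Granting this, the claim about $B_2 \cup B_3$ follows from Lemma \ref{leading_10}. That region is precisely $III_{b}^{k} \leq m \leq II_{t}^{k}$, which sits inside $2^{k-1} \leq m \leq I_{b}^{k}$, so every node there begins with ``10'' and already carries one zero at position $k-2$. A node with exactly one zero would need all remaining bits to be ones, forcing $m = I_{b}^{k}$; but $I_{b}^{k}$ is the bottom node of $B_1$, not a node of $B_2 \cup B_3$. Hence every node of $B_2 \cup B_3$ has at least two zeros, so none has $\Ptwo(m) = 0$ and there is no edge from \rrec{0}.

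For $B_4$ I would separate the bottom node from the rest. The non-bottom nodes satisfy $2^{k-1} \leq m \leq IV_{t}^{k} = III_{b}^{k}-1$, so Lemma \ref{leading_100} shows each begins with ``100'' and thus carries two zeros; consequently $\Ptwo(m) > 0$ and none receives an edge from \rrec{0}. The only remaining node is $IV_{b}^{k} = 2^{k-1}-1$, whose binary form is $k-1$ ones with no zeros, so $\Ptwo(IV_{b}^{k}) = 0$ and \rrec{0} has a single edge into $B_4$, landing on \rrec{IV_{b}^{k}}.

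The part that needs the most care is the characterization in the first paragraph, where the treatment of padding zeros for numbers with few zeros must be handled correctly; once that is pinned down, the rest is a direct application of Lemmas \ref{leading_10} and \ref{leading_100} together with the single observation that $I_{b}^{k}$ lies outside $B_2 \cup B_3$. One could instead mimic the induction of Proposition \ref{P1:One zero edge}, propagating the box descriptions of Propositions \ref{P2:Box I structure}--\ref{P2:Box IV structure} from $G_k$ to $G_{k+1}$, but that approach forces one to track how each edge out of \rrec{0} is created or destroyed under the node shifts, which is more delicate than simply counting the zeros above.
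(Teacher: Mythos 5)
Your proof is correct, but it takes a genuinely different route from the paper. The paper proves this proposition by induction on $k$: the base cases $G_4$ and $G_5$ are read off the examples, and the inductive step invokes Propositions \ref{P2:Box II structure}, \ref{P2:Box III structure}, and \ref{P2:Box IV structure} to track how the edges out of \rrec{0} are inherited when each box of $G_{k+1}$ is identified with a (union of) box(es) of $G_k$. You instead argue directly: you characterize $\Ptwo(m)=0$ as ``$m$ has at most one zero in its binary representation,'' and then count zeros using Lemmas \ref{leading_10} and \ref{leading_100} together with the observation that the unique $k$-digit string of the form ``10'' followed by ones is $I_b^k$, which lies outside $B_2 \cup B_3$. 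Your characterization of $\Ptwo^{-1}(0)$ is handled correctly, including the padding subtlety, and the range checks ($III_b^k \geq 2^{k-1}$, $II_t^k < I_b^k$, $IV_t^k \leq 2^k - 2^{k-2} - 2^{k-3} - 1$) all go through. What your approach buys is independence from the recursive-structure propositions and from figure-checked base cases, plus a complete description of \emph{all} targets of edges from \rrec{0} (it also explains the edges into $B_1$, e.g.\ to $2^k - 2$ and $2^k - 3$); what the paper's approach buys is consistency with the recursive framework it is already building, reusing machinery that is needed anyway for the maximum-weight-path theorem.
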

\begin{proof}
	(Proof by induction)
	\begin{itemize}
		\item Base case: Observe that this is true for $G_4$ and $G_5$ in examples \ref{P2:G_4 boxes ex} and \ref{P2:Box IV structure ex}.
		
		\item Suppose the proposition is true for $k \geq 4$. Consider $G_{k+1}.$
		
		By Proposition \ref{P2:Box II structure}, $B_2$ of $G_{k+1}$ is a copy of $B_2 \cup B_3$ of $G_k$, so will have no edges from \rrec{0}.
		
		By Proposition \ref{P2:Box III structure}, $B_3$ of $G_{k+1}$ is a copy of $B_4$ of $G_k$ except that the (only) edge from \rrec{0} has been changed, so it will have no edges from \rrec{0}.
		
		By Proposition \ref{P2:Box IV structure}, $B_4$ of $G_{k+1}$ is a copy of $B_2 \cup B_3 \cup B_4$ of $G_k$, so it will have only one edge from \rrec{0} which goes to \rrec{IV_{b}^{k+1}}.
	\end{itemize}
\end{proof}

\begin{proposition} \label{P2:Box I outside edge}
	$B_1$ of $G_k$ has only one edge from outside of $B_1$ which is the edge with weight $-1$ from \rrec{II_{t}^{k}} to \rrec{I_{b}^{k}}.
\end{proposition}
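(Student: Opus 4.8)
The plan is to classify every edge of $G_k$ whose head lies in $B_1$ but whose tail lies outside $B_1$, handling the blue (weight $-1$) and red (weight $+1$) edges separately. Recall that the nonzero nodes of $B_1$ form the contiguous block \rrec{I_b^k}, \ldots, \rrec{I_t^k}, and that \rrec{0} has no incoming edges, so only edges whose head is a nonzero $B_1$-node can matter.

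First I would dispose of the blue edges. A blue edge runs from \rrec{m} to \rrec{m+1}. For its head to lie in $B_1$ while its tail lies outside, we need $m+1 = I_b^k$, the bottom of the block, so $m = I_b^k - 1 = II_t^k$, which sits in $B_2$. Since $2^{k-1}-1 \le II_t^k < 2^k-1$ this blue edge genuinely exists, and it is the unique blue edge entering $B_1$ from outside; every other blue edge into the block $[I_b^k, I_t^k]$ has its tail in the block as well.

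The substantive part is to show that no red edge enters $B_1$ from outside, that is, $\Ptwo(m) \in B_1$ for every $m$ with $I_b^k \le m \le I_t^k - 1$ (the node $I_t^k = 2^k-1$ receives no red edge). I would argue from binary representations, using that $I_b^k = 2^k - 2^{k-2} - 1$ has the form ``$10\,1\cdots1$'' with a single zero, while every $m$ with $2^k - 2^{k-2} \le m \le 2^k-2$ begins with ``$11$''. If $m$ has at most one zero — this covers $m = I_b^k$ and the ``$11$''-leading nodes with a single zero — then the second zero from the right is a pad zero lying to the left of the leading one, so $\Ptwo$ clears every bit and returns $0 \in B_1$. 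If instead $m$ begins with ``$11$'' and has at least two zeros, then the second zero from the right sits at a position $\le k-3$, so $\Ptwo$ leaves the two leading ones intact; hence $\Ptwo(m) \ge 2^{k-1}+2^{k-2}-1 = I_b^k$, while $\Ptwo(m) \le m-1 < I_t^k$, giving $\Ptwo(m) \in B_1$. Combining the two parts leaves the blue edge from \rrec{II_t^k} to \rrec{I_b^k} as the only edge entering $B_1$ from outside.

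I expect the main obstacle to be the red-edge bookkeeping: one must check that the leading pair ``$11$'' really survives $\Ptwo$ exactly when $m$ has two or more zeros, which is what confines $\Ptwo(m)$ to $[I_b^k, I_t^k]$, and one must not overlook the boundary node $m = I_b^k$, whose single zero forces $\Ptwo(I_b^k)=0$ rather than a value inside the block. Lemmas \ref{leading_one} and \ref{leading_101} supply the facts about leading bits that these case distinctions rely on. Alternatively, one could obtain the same conclusion inductively from Proposition \ref{P2:Box I structure}, reading it as a complete description of the incoming edges of $B_1$ (all internal shifts of $G_k$ together with the single substituted blue edge into \rrec{I_b^{k+1}}), but the direct binary argument is more self-contained.
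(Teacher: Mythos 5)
Your argument is correct. The classification is exhaustive: the only blue edge entering the block $[I_b^k, I_t^k]$ from below is the one from $II_t^k = I_b^k - 1$, and your two-case analysis of $\Ptwo$ on $I_b^k \le m \le I_t^k-1$ (at most one zero $\Rightarrow$ $\Ptwo(m)=0\in B_1$; leading ``11'' with two or more zeros $\Rightarrow$ the second zero from the right lies at position at most $k-3$, so $I_b^k = 2^{k-1}+2^{k-2}-1 \le \Ptwo(m) < I_t^k$) correctly places every red tail inside $B_1$. You also handle the boundary node $m=I_b^k$ and the fact that $0$ counts as a node of $B_1$, both of which are easy to get wrong. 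The paper, by contrast, dispatches this proposition in one line as ``immediate from Proposition \ref{P2:Box I structure}'': since $B_1$ of $G_{k+1}$ is declared there to be a shifted copy of the self-contained graph $G_k$ with exactly one substituted incoming blue edge, no other edge can cross into it. So the two routes differ in where the work lives — the paper pushes all the binary bookkeeping into the proof of Proposition \ref{P2:Box I structure} and then reads off the corollary, while you redo an equivalent bit-level analysis directly and self-containedly, without needing the recursive copy structure at all. Your version is longer here but independent of the earlier proposition; the paper's is shorter but only as trustworthy as one's reading of ``is the same as'' in Proposition \ref{P2:Box I structure} as a complete description of all incident edges, a reading you explicitly flag in your closing remark.
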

\begin{proof}
	Immediate from Proposition \ref{P2:Box I structure}.
\end{proof}

\begin{example}
	Putting all of the above Propositions together allows for $G_{k+1}$ to be built from $G_{k}$
	
	\vspace*{0.5cm}
	\centering
	\begin{tikzpicture}[> = stealth, shorten > = 1pt, semithick, node distance=3cm]
	
	\node[align=center](BI){$G_{k}$};
	\node[align=center](BII)[left of=BI]{$B_2 \cup B_3$ \\ of $G_{k}$};
	\node[align=center](BIII)[left of=BII]{$B_4$ of $G_{k}$};
	\node[align=center](BIV)[left of=BIII]{$B_2 \cup B_3 \cup B_4$ \\ of $G_{k}$};
	
	\node[draw, fit=(BI)](I){};
	\node[above of=I, node distance=1cm](I-label){$B_1$};
	\node[draw, fit=(BII)](II){};
	\node[above of=II, node distance=1cm](II-label){$B_2$};
	\node[draw, fit=(BIII)](III){};
	\node[above of=III, node distance=1cm](III-label){$B_3$};
	\node[draw, fit=(BIV)](IV){};
	\node[above of=IV, node distance=1cm](IV-label){$B_4$};
	
	\path[->, bend left, blue] (BII) edge (BI);
	\path[->, bend left, blue] (BIII) edge (BII);
	\path[->, bend left, blue] (BIV) edge (BIII);
	
	\path[->, bend right, red] (BIV) edge (BII);
	\path[->, bend right, red] (BIV) edge (BIII);
	\end{tikzpicture}
\end{example}

\paragraph*{Maximum Weight Path}

\begin{theorem}
	The maximum weight path from \rrec{0} to \rrec{I^{k}_t} in $G_k$ for $k \geq 2$ is zero.
\end{theorem}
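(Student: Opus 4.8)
The plan is to induct on $k$, taking $G_2,G_3,G_4$ from Example \ref{P2:G_2 - G_4} as base cases and building $G_{k+1}$ from $G_k$ through the four recursive Propositions. Write $f(m)$ for the maximum weight of a path from \rrec{0} to \rrec{m} in the graph under discussion, and let $A_k = f(I^k_t)$ in $G_k$; the inductive hypothesis is $A_k = 0$. The lower bound $A_{k+1}\geq 0$ is immediate: since $\Ptwo(2^{k+1}-2)=0$ there is a red edge from \rrec{0} to \rrec{2^{k+1}-2} followed by the blue edge to \rrec{I^{k+1}_t}, a path of weight $+1-1=0$. All the work is the upper bound $A_{k+1}\leq 0$.

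For the inductive step I would exploit that every edge runs from a smaller node to a larger one, so any path from \rrec{0} to \rrec{I^{k+1}_t} visits nodes in strictly increasing order and is governed by its first move out of \rrec{0}. By Proposition \ref{P2:Box II-IV zero edge} the only edge from \rrec{0} into $B_2\cup B_3\cup B_4$ is the red edge to \rrec{IV^{k+1}_b}; every other edge from \rrec{0} lands inside $B_1$. This gives two cases. If the first move enters $B_1$, monotonicity keeps the whole path in $B_1$, and by Proposition \ref{P2:Box I structure} the induced subgraph on $B_1$ together with \rrec{0} is a faithful copy of $G_k$ (the extra blue edge into \rrec{I^{k+1}_b} is unusable here, its tail lying outside $B_1$); so such a path has weight at most $A_k = 0$. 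If instead the first move is the red edge to \rrec{IV^{k+1}_b}, then by Proposition \ref{P2:Box I outside edge} the path can only reach the target by climbing through $B_4,B_3,B_2$ and entering $B_1$ via the unique external edge, the blue edge \rrec{II^{k+1}_t}$\to$\rrec{I^{k+1}_b}, after which it stays in $B_1$.

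The crux is bounding the weight on reaching \rrec{II^{k+1}_t}. I would prove, by a parallel induction carried alongside the main one, that for $k\geq 5$ the quantities $f$ take the fixed profile $1,2,2,3,2,2,1$ at the boundary nodes \rrec{IV^{k}_b}, \rrec{IV^{k}_t}, \rrec{III^{k}_b}, \rrec{III^{k}_t}, \rrec{II^{k}_b}, \rrec{II^{k}_t}, \rrec{I^{k}_b} (checked at $k=4,5$ directly, with $G_4$ mildly degenerate). Each value propagates by reading the box Propositions at the boundaries: $f(IV^{k+1}_b)=1$ since $\Ptwo(IV^{k+1}_b)=0$; by Proposition \ref{P2:Box IV structure}, $B_4$ of $G_{k+1}$ copies $B_2\cup B_3\cup B_4$ of $G_k$ with matching bottom value $1$, so $f(IV^{k+1}_t)=f(II^k_t)=2$; the elevated bottom value $f(III^{k+1}_b)=2$ forces a uniform $+1$ shift across the copy in Proposition \ref{P2:Box III structure}, giving $f(III^{k+1}_t)=f(IV^k_t)+1=3$; and since the resulting $f(II^{k+1}_b)=2$ matches $f(III^k_b)=2$, the isomorphism $B_2\cong B_2\cup B_3$ of Proposition \ref{P2:Box II structure} transfers values unchanged, so $f(II^{k+1}_t)=f(II^k_t)=2$. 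With $f(II^{k+1}_t)=2$ in hand, the blue edge into \rrec{I^{k+1}_b} contributes $2-1=1$, exactly matching the red edge from \rrec{0}, so $f(I^{k+1}_b)=1=f(IV^k_b)$; hence the copy of $G_k$ on $B_1$ is reproduced faithfully and $A_{k+1}=f(I^{k+1}_t)=A_k=0$. This also closes case (b): from \rrec{I^{k+1}_b}, whose preimage \rrec{IV^k_b} in $G_k$ is reached at weight $1$, the best continuation to the top adds $A_k-1=-1$, for total $1+(-1)=0$.

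I expect the main obstacle to be the auxiliary profile induction, and specifically the bookkeeping that lets a box isomorphism preserve maximum-weight values: each transfer is legitimate only because the blue edge feeding a box's bottom node never strictly exceeds the red edge from the relevant $IV_b$-type node, so that the bottom value, and therefore every value in the box, agrees with the model box in $G_k$. Verifying these tie-or-inequality conditions at every boundary, and separately handling the collapse of $B_2,B_3$ when $k=4$, is where the genuine care lies; the two-case decomposition of paths and the final replacement of $B_1$ by $G_k$ are then routine.
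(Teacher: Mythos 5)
Your proposal is correct and follows essentially the same route as the paper: induction on $k$ via the box decomposition, splitting on whether the first step out of \rrec{0} enters $B_1$ or goes to \rrec{IV^{k+1}_b}, and carrying a strengthened hypothesis about the bottom half $B_2 \cup B_3 \cup B_4$ alongside the main claim. The only real difference is bookkeeping: where the paper's auxiliary invariant is the qualitative statement that the maximum weight path from \rrec{IV^{k}_b} to \rrec{I^{k}_b} through the bottom half is zero, you track the explicit max-weight profile $1,2,2,3,2,2,1$ at the boundary nodes, which I verified propagates correctly (including the degenerate $k=4\to 5$ step) and if anything makes the tie-checking at the box boundaries more transparent than the paper's version.
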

\begin{proof}
	(Proof by induction)
	
	\begin{itemize}
		\item Base cases: Checking $G_2, G_3$, $G_4$, and $G_5$ in examples \ref{P2:G_2 - G_4} and \ref{P2:G_5} shows that the maximum weight path from \rrec{0} to \rrec{I_{t}^{k}} is $0$ for $k=2,3,4,5$. Also, the ``bottom halves" of $G_4$ and $G_5$, i.e. $B_2 \cup B_3 \cup B_4$, has a maximum weight path of zero from \rrec{IV_{b}^{k}} to \rrec{I_{b}^{k}}. (See examples \ref{P2:G_5} and \ref{P2:G_4 boxes ex}.)
		
		\item Assume for induction that $G_k$ has a maximum weight path of zero and that the maximum weight path from \rrec{IV_{b}^{k}} to \rrec{I_{b}^{k}} is zero for $k \leq N$ where $N \geq 5$. Consider $G_{N+1}$. By Propositions \ref{P2:Box I structure} and \ref{P2:Box II-IV zero edge} there are two choices to travel from \rrec{0}.
		\begin{itemize}
			\item[$\circ$] A path from \rrec{0} to $B_1$ is taken:
			
			Then the path is entirely contained inside $G_N$ by Proposition \ref{P2:Box I structure}, so the maximum weight is zero by the inductive hypothesis.
			
			\item[$\circ$] The path from \rrec{0} to \rrec{IV_{b}^{N+1}} is taken:
			
			Here again there are two options by Proposition \ref{P2:Box I outside edge}.
			\begin{itemize}
				\item[$\bullet$] A path from \rrec{IV_{b}^{N+1}} to a node in $B_4$ is taken:
				
				By Propositions \ref{P2:Box II structure} and \ref{P2:Box III structure}, all such paths must go through \rrec{III_{b}^{N+1}}. By Proposition \ref{P2:Box IV structure}, this portion of the path is identical to the bottom half of $G_N$, so by the inductive hypothesis, this portion has a maximum weight of zero. Again, by Propositions \ref{P2:Box II structure} and \ref{P2:Box III structure}, the path from \rrec{III_b^{N+1}} to \rrec{I_b^{N+1}} is a copy of the bottom half of $G_N$ except with fewer edges of weight $+1$, thus this portion of the path must have a maximum weight of at most zero. Therefore, \rrec{0} to \rrec{IV_{b}^{N+1}} and then the path from \rrec{IV_{b}^{N+1}} to \rrec{I_{b}^{N+1}} has a maximum weight of at most one, which is the same as taking the direct path from \rrec{0} to \rrec{I_{b}^{N+1}}. Since the direct path is completely contained in $G_N$ by Proposition \ref{P2:Box I structure}, it results in a maximum weight path of zero. Replacing the direct path with a longer path through the bottom half with the same weight will still result in a maximum weight path of zero.
				
				\item[$\bullet$] A path from \rrec{IV_{b}^{N+1}} to a node in $B_2$ is taken:
				
				Since \rrec{IV_{b}^{N+1}} $\cup B_2$ of $G_{N+1}$ is a copy of \rrec{IV_{b}^{N}} $\cup B_2 \cup B_3$ of $G_N$ by Proposition \ref{P2:Box II structure}, the maximum weight path is zero by the inductive hypothesis.
				
				\item[$\bullet$] The path from \rrec{IV_{b}^{N+1}} to \rrec{III_{b}^{N+1}} in $B_3$ is taken:
				
				By Proposition \ref{P2:Box III structure}, $B_3$ of $G_{N+1}$ is a copy of $B_4$ of $G_N$. This in turn is a copy of $B_2 \cup B_3 \cup B_4$ of $G_{N-1}$ by Proposition \ref{P2:Box IV structure}. Thus the maximum weight path from \rrec{III_{b}^{N+1}} to \rrec{II_{b}^{N+1}} is zero by the inductive hypothesis. Therefore, taking the edge from \rrec{IV_{b}^{N+1}} to \rrec{III_{b}^{N+1}} and then a path from there to \rrec{II_{b}^{N+1}} has the same weight as taking the edge from \rrec{IV_{b}^{N+1}} to \rrec{II_{b}^{N+1}} directly, so the maximum weight path is again zero by the above case.
			\end{itemize}
		\end{itemize}
		
	\end{itemize}
\end{proof}

\begin{corollary}\label{cor-rk1-prune_2_mw}
	The maximum weight path from \rrec{0} to \rrec{2^{n}-1} in $G_2 \cup G_3 \cup \cdots \cup G_n$ is zero.
\end{corollary}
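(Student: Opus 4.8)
The plan is to induct on $n$, using the immediately preceding Theorem (the maximum weight path from \rrec{0} to \rrec{I_{t}^{k}} inside a single $G_k$ is zero) as the engine, in the same spirit as the Prune 1 corollary. The base case $n=2$ is immediate, since $G_2 \cup \cdots \cup G_2 = G_2$ and the destination \rrec{2^{2}-1} is \rrec{I_{t}^{2}}, so the Theorem applies verbatim. The lower bound is also free: the Theorem furnishes a weight-zero path from \rrec{0} to \rrec{I_{t}^{n}} lying inside $G_n \subseteq G_2\cup\cdots\cup G_n$, so the maximum is at least zero and it remains only to prove it is at most zero.

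For the inductive step I would single out the ``gateway'' node \rrec{2^{n-1}-1}, which is simultaneously the top node \rrec{I_{t}^{n-1}} of $G_{n-1}$ and the bottom node \rrec{IV_{b}^{n}} of $G_n$. The key structural observation is that, by the definition of the $G_k$, every edge of $G_2\cup\cdots\cup G_n$ whose target lies in the ``upper block'' $\{2^{n-1},\ldots,2^{n}-1\}$ is an edge of $G_n$, and the only node of $G_n$ lying strictly below $2^{n-1}-1$ is \rrec{0}. Since every edge increases the node value, any path from \rrec{0} to \rrec{I_{t}^{n}} enters the upper block for the first time along a $G_n$-edge whose source is either \rrec{0} or the gateway \rrec{2^{n-1}-1}. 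This gives two cases.

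In the first case the path leaves \rrec{0} directly into the upper block, so it lies entirely within $G_n$ and has weight at most zero by the Theorem. In the second case the path factors as $\gamma_1$ from \rrec{0} up to the gateway (using only nodes $\le 2^{n-1}-1$, hence living in $G_2\cup\cdots\cup G_{n-1}$) followed by $\gamma_2$ from the gateway to \rrec{I_{t}^{n}} (living in $G_n$). By the inductive hypothesis $w(\gamma_1)\le 0$. To control $w(\gamma_2)$ I would prepend the direct red edge from \rrec{0} to \rrec{IV_{b}^{n}}$=$\rrec{2^{n-1}-1}, which exists by Proposition \ref{P2:Box II-IV zero edge} and has weight $+1$; the resulting path $\gamma'$ lies entirely in $G_n$, so the Theorem gives $1 + w(\gamma_2) = w(\gamma') \le 0$, whence $w(\gamma_2)\le -1$ and $w(\gamma_1)+w(\gamma_2)\le -1\le 0$. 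Thus every path has weight at most zero, and combined with the lower bound the maximum is exactly zero.

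The main obstacle, and the point where this argument must depart from the Prune 1 corollary, is that here all blue edges carry weight $-1$ regardless of $n$, so a detour through a lower block $G_k$ is \emph{not} strictly penalized the way it is for Prune 1, where the $-n$ weights force every lower detour to be strictly negative. Reaching the gateway from below costs weight at most zero, which only \emph{ties} rather than beats staying in $G_n$. The resolution is precisely the domination step above: the direct edge from \rrec{0} to the gateway has weight $+1$, strictly exceeding the at-most-zero weight of any route to the gateway from below, so dipping into the lower blocks yields no advantage. The one step I would check most carefully is the structural claim that every edge crossing into the upper block originates at \rrec{0} or the gateway, as this is what makes the two-case split exhaustive; it follows from the definition of the $G_k$ as subgraphs and from the fact that $\Ptwo(m)$ for $m$ in $G_n$'s range lands in $G_n$'s node set.
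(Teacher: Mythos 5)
Your proposal is correct and follows essentially the same route as the paper: both arguments hinge on comparing any route from \rrec{0} to the gateway node \rrec{2^{n-1}-1} through the lower graphs (weight at most zero, by the preceding Theorem applied below) against the direct red edge of weight $+1$ into $G_n$, concluding that the maximum weight path may be taken to lie entirely in $G_n$, where the Theorem finishes the job. Your version merely packages this domination step as an explicit induction on $n$ with a ``first entry into the upper block'' case split, which makes the paper's rather terse exchange argument fully rigorous without changing its substance.
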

\begin{proof}
	Note that taking a red edge from \rrec{0} to a node in $G_k$ and then the path from that node to \rrec{I_{t}^{k}} has maximum weight zero by the above theorem while the red edge from \rrec{0} to \rrec{I_{t}^{k}} has a weight of one. Therefore the maximum weight path must take a red edge from \rrec{0} to a node in $G_n$. Thus the maximum weight path lies entirely in $G_n$.
\end{proof}

\begin{example}
	$G_2 \cup G_3 \cup G_4 \cup G_5$.
	
	\vspace*{0.5cm}
	\centering
	\begin{tikzpicture}[> = stealth, shorten > = 1pt, semithick, node distance=1.5cm]
	\tikzstyle{rec} = [draw=black, thick, rectangle, fill=white, align=center]
	\tikzstyle{crc} = [draw=black, thick, fill=white, circle]
	
	\node[crc](31){$31$};
	\node(E5)[left of=31]{$\cdots$};
	\node[crc](15)[left of=E5]{$15$};
	\node(E4)[left of=15]{$\cdots$};
	\node[crc](7)[left of=E4]{$7$};
	\node(E3)[left of=7]{$\cdots$};
	\node[crc](3)[left of=E3]{$3$};
	\node(E2)[left of=3]{$\cdots$};
	\node[crc](1)[left of=E2]{$1$};
	\node[crc](0)[below of=1, node distance=2cm]{$0$};
	
	\node[draw, fit=(15) (31)](G5){};
	\node[above of=G5, node distance=1cm](G5-label){$G_5$};
	\node[draw, fit=(7) (15), dashed](G4){};
	\node[above of=G4, node distance=1cm](G4-label){$G_4$};
	\node[draw, fit=(3) (7)](G3){};
	\node[above of=G3, node distance=1cm](G3-label){$G_3$};
	\node[draw, fit=(1) (3), dashed](G2){};
	\node[above of=G2, node distance=1cm](G2-label){$G_2$};
	
	\path[->, red] (0) edge (1);
	\path[->, bend right, red] (0) edge (E2);
	\path[->, bend right, red] (0) edge (3);
	\path[->, bend right, red] (0) edge (E3);
	\path[->, bend right, red] (0) edge (7);
	\path[->, bend right, red] (0) edge (E4);
	\path[->, bend right, red] (0) edge (15);
	\path[->, bend right, red] (0) edge (E5);
	\end{tikzpicture}
\end{example}

By construction of the graph for $\E_{2}$, Corollary \ref{cor-rk1-prune_2_mw} gives us $\displaystyle\underset{(R,C) \in \E_2(S_2)}{\max}\; \frac{C}{R} = 2$. Thus for any $S \in S_{2}$, the number of cases checked $C$ is at worst $2R$, so at most $\vert S \vert$ extraneous cases are checked.

\bibliographystyle{plain}
{\small

}

\end{document}